\newtheorem{lemma}{Lemma}[section]
\newtheorem{theorem}[lemma]{Theorem}
\newtheorem{corollary}[lemma]{Corollary}
\theoremstyle{remark}
\newtheorem{remark}[lemma]{Remark}
\newtheorem{problem}[lemma]{Problem}
\newcommand{\N}{\mathbb N}
\newcommand{\R}{\mathbb R}
\newcommand{\Z}{\mathbb Z}
\newcommand*\xbar[1]{\hbox{\vbox{\hrule height 0.5pt \kern0.5ex\hbox{\kern-0.1em\ensuremath{#1}\kern-0.1em}}}}
\renewcommand{\ge}{\geqslant}
\renewcommand{\le}{\leqslant}
\begin{document}

\title{Means of iterates}
\author{Szymon Draga}
\address{Diebold Nixdorf, Gawronów 4, PL-40-527 Katowice}
\email{szymon.draga@gmail.com}
\author{Janusz Morawiec}
\address{Instytut Matematyki, Uniwersytet Śl\c{a}ski, Bankowa 14, PL-40-007 Katowice, Poland}
\email{morawiec@math.us.edu.pl}
\subjclass[2010]{Primary: 39B22. Secondary: 26A18, 39B12}
\keywords{continuous solution, iterate, polynomial-like iterative equation}
\date{}

\begin{abstract}
We determine continuous bijections $f$, acting on a real interval into itself, whose $k$-fold iterate is the quasi-arithmetic mean of all its subsequent iterates from $f^0$ up to $f^n$ (where $0\le k\le n$). Namely, we prove that if at most one of the numbers $k,n$ is odd, then such functions consist of at most three affine pieces.
\end{abstract}
	
\maketitle
	

\section{Introduction}
Consider integers $n\ge 2$, $0\le k\le n$ and a~non-trivial interval $I\subset\R$. The following problem arises when studying polynomial-like iterative equations, means of functions or convergence of means of iterates: Determine all continuous functions $F\colon I\to I$ with the $k$-fold iterate $F^k$ being a mean of all its subsequent iterates up to $F^n$; more precisely, find all self-mappings $F\in C(I)$ such that 
\begin{equation}\label{E}
F^k(x)=M(x,F(x),F^2(x),\ldots,F^n(x))
\end{equation}
for every $x\in I$, where $M\colon I^{n+1}\to I$ is a mean and $C(I)$ stands for the family of all continuous real functions acting on $I$.
	
In this paper we concentrate on equation \eqref{E} in the case where $M$ is a~quasi-arithmetic mean, i.e., with $M$ being of the form
\begin{equation}\label{M}
M(x_0,...,x_n)=\varphi^{-1}\left(\frac{1}{n+1}\sum_{i=0}^n\varphi(x_i)\right)
\end{equation}
for all $x_0,\ldots,x_n\in I$, where $\varphi$ is a~continuous bijection form $I$ onto an interval $J\subset\R$. Observe that it is enough to consider equation \eqref{E} with $M$ being the arithmetic mean which follows from the following remark.
	
\begin{remark}\label{rem11}
Assume that $J\subset\R$ is an interval, $\varphi\colon I\to J$ is a~bijection onto $J$ and $F\colon I\to I$. Then $F$ satisfies \eqref{E} with $M$ given by \eqref{M} on $I$ if and only if $f=\varphi\circ F\circ\varphi^{-1}$ satisfies 
\begin{equation}\label{e}
f^k=\frac{1}{n+1}\sum_{i=0}^nf^i
\end{equation} 
on $J$. Moreover, $f$ is a~(continuous) bijection if and only if $F$ is a~(continuous) bijection.
\end{remark}

It is easy to prove that any self-mapping satisfying \eqref{e} is injective (see, e.g. \cite[Lemma 2.1]{DM2016}). Therefore, any continuous surjective solution to equation \eqref{e} is automatically continuous bijection; in particular it is strictly monotone.
	
The paper is organized as follows. In Section \ref{polandrec} we collect basic properties of polynomial-like iterative equations and preliminary information on linear homogeneous recurrence relations. In Section \ref{case0n} we solve equation \eqref{e} in the cases where $k=0$ or $k=n$. The case where $0<k<n$ is treated in Section \ref{cases}. The final section contains examples and remarks on the considered equation as well as some further problems.


\section{Polynomial-like iterative equations and recurrence relations}\label{polandrec}
Equation \eqref{e} is a very special case of an iterative equation belonging to an interesting and widely studied class of functional equations, called polynomial-like iterative equations, of the form
\begin{equation}\label{eq}
\sum_{i=0}^na_if^i(x)=0,
\end{equation}
where $f^k$ stands for the $k$-fold iterate of a self-mapping unknown function $f$ (defined on an interval $I\subset\R$) and  $a_0,a_1,\ldots,a_n$ are given real coefficients. It turns out that continuous solutions to \eqref{eq} deeply depend on the roots of its characteristic equation
\begin{equation}\label{che}
\sum_{i=0}^na_ir^i=0
\end{equation}
which is obtained by assuming that $f$ has the form $f(x)=rx$.
	
In general, it is very difficult to find all continuous functions satisfying equation \eqref{eq}. These difficulties follow from the non-linearity of the operator $f\mapsto f^n$. Up to now the complete solution is known only in the case where $n=2$ and $I=\R$ \cite{N1974}. The problem still remains open even for $n=3$~\cite{M1989}. A~partial solution in the case where $n=3$ and $I=\R$ was obtained in \cite{ZG2014} and some results in the case where $n\ge 3$ and $I\neq\R$ can be found in \cite{MR1983,RL1990,Ja1996,ZB2000,ZXZ2013,LZ2013}. 
	
It is known \cite{MZ1998} that if a~polynomial $\sum_{i=0}^mb_ir^i$ divides a~polynomial $\sum_{i=0}^na_ir^i$ and a~function $f$ satisfies $\sum_{i=0}^mb_if^i(x)=0$ then it satisfies also \eqref{eq}. One of methods for finding solutions to \eqref{eq} is based on a~reverse reasoning, i.e., if a~continuous function $f$ satisfies equation \eqref{eq}, then we want to find a~divisor of the polynomial $\sum_{i=0}^na_ir^i$ such that $f$ is a~solution to the corresponding equation of lower order. First such results on the whole real line were obtained in \cite{MZ1997} in the case where all roots of the characteristic equation are real and satisfy some special conditions. Further research in this direction was obtained in \cite{YZ2004,ZZ2010,DM2016,D2016} and some of them will be crucial tools in this paper.
	
Equation \eqref{che} can be considered as the characteristic equation of the recurrence relation
\begin{equation}\label{rec}
\sum_{i=0}^n a_{i}x_{m+i}=0
\end{equation}
which might be obtained by choosing $x_0\in I$ arbitrarily and putting $x_m=f(x_{m-1})$ for every $m\in\N$. It is easy to see that if $a_0\neq0$ and a~function $f$ satisfies \eqref{eq}, then it is injective (see, e.g., \cite[Lemma 2.1.]{DM2016}). This observation implies that every continuous solution to \eqref{eq} is strictly monotone. It means that the sequence $(x_m)_{m\in\N_0}$ given by $x_0\in I$ and $x_m=f(x_{m-1})$ for $m\in\N$ is either monotone (in the case of increasing $f$) or anti-monotone (in the case of decreasing $f$). By \textit{anti-monotone} we mean that the expression $(-1)^m(x_m-x_{m-1})$ does not change its sign when $m$ runs through $\N_0$. In the case where $f$ is bijective we can consider the dual equation
\begin{equation}\label{eqd}
\sum_{i=0}^na_if^{n-i}(x)=0.
\end{equation}
Putting $f^{-n}(x)$ in place of $x$ we see that $f$ satisfies \eqref{eq} if and only if $f^{-1}$ satisfies \eqref{eqd}. We can also extend the sequence $(x_m)_{m\in\N_0}$ to the whole $\Z$ by setting $x_{-m}=f^{-1}(x_{-m+1})$ for $m\in\N$. Then relation \eqref{rec} is satisfied for $m\in\Z$.
	
For the theory of linear recurrence relations we refer the reader, for instance, to \cite[\S 3.2]{J1996}. We shall recall only the most significant theorem in this matter. In order to do this and simplify the writing we introduce the following notation: For a given polynomial $c_nr^n+\ldots+c_1r+c_0$ we denote by ${\mathcal R}(c_n,\ldots,c_0)$ the collection $\{(r_1,k_1),\ldots,(r_p,k_p)\}$ of all  pairs of its pairwise distinct (complex) roots $r_1,\ldots,r_p$ and their multiplicities $k_1,\ldots,k_p$, respectively. Here and throughout the paper by a~polynomial we mean a polynomial with real coefficients.
\begin{theorem}\label{thm31}
Assume that
\begin{align*}
{\mathcal R}(a_n,\ldots,a_0)&=\{(\lambda_1,l_1),\ldots,(\lambda_p,l_p),\\
&\hspace{4ex}(\mu_1,m_1),(\xbar{\mu_1},m_1),\ldots,(\mu_q,m_q),(\xbar{\mu_q},m_q)\}. 
\end{align*}
Then a real-valued sequence $(x_j)_{j\in\mathbb N_0}$ satisfies $(\ref{rec})$ if and only if it is given by
$$
x_j=\sum_{k=1}^pA_k(j)\lambda_k^j+\sum_{k=1}^q\big(B_k(j)\cos j\phi_k+C_k(j)\sin j\phi_k\big)|\mu_k|^j
$$
for every $j\in\N_0$, where $A_k$ is a polynomial whose degree equals at most $l_k-1$ for $k=1,\ldots,p$ and $B_k,C_k$ are polynomials whose degrees equal at most $m_k-1$, with $\phi_k$ being an argument of $\mu_k$, for $k=1,\ldots,q$.
\end{theorem}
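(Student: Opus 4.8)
The plan is to derive this classical statement (see also \cite[\S 3.2]{J1996}) by first solving \eqref{rec} over $\mathbb C$ and then selecting the real-valued solutions. Throughout we may assume $a_0a_n\neq0$, as holds everywhere in this paper, so that the characteristic polynomial $p(r)=\sum_{i=0}^na_ir^i$ has degree exactly $n$ and does not vanish at $0$; in particular each of its roots is non-zero, and since $p$ has real coefficients its non-real roots occur in conjugate pairs of equal multiplicity --- precisely the indexing used in the statement. Let $E$ be the shift operator $(Ex)_j=x_{j+1}$ acting on the space of all complex sequences indexed by $\N_0$. Then \eqref{rec} says exactly that $p(E)x=0$, so the space of complex solutions equals $\ker p(E)$; moreover a member of $\ker p(E)$ is determined by its first $n$ entries (the coefficient of $E^n$ in $p(E)$ being $a_n\neq0$), and any choice of these extends to a solution, whence $\dim_{\mathbb C}\ker p(E)=n$.

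The algebraic heart of the argument is the decomposition $\ker p(E)=\bigoplus_{\rho}\ker(E-\rho)^{k_\rho}$, the sum running over the pairwise distinct roots $\rho$ of $p$ with multiplicities $k_\rho$. As the factors $(r-\rho)^{k_\rho}$ are pairwise coprime, this follows from a standard B\'ezout argument in the polynomial ring $\mathbb C[r]$ --- writing $1=u(r)(r-\rho)^{k_\rho}+v(r)q(r)$ with $q$ the complementary factor, the operator $v(E)q(E)$ is a projection of $\ker p(E)$ onto $\ker(E-\rho)^{k_\rho}$ --- combined with an induction on the number of distinct roots. It then suffices to describe one summand. A direct computation shows that $E-\rho$ maps the sequence $\big(w(j)\rho^j\big)_{j\in\N_0}$ to $\big((w(j+1)-w(j))\rho^{j+1}\big)_{j\in\N_0}$ for every polynomial $w$, hence it sends a sequence of this shape to another of the same shape with $\deg w$ lowered by one; consequently every $j\mapsto w(j)\rho^j$ with $\deg w\le k_\rho-1$ lies in $\ker(E-\rho)^{k_\rho}$. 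Using $\rho\neq0$ one checks that the subspaces $\{\,w(j)\rho^j:\deg w\le d\,\}$ strictly increase with $d$, so these sequences span a $k_\rho$-dimensional subspace; and $\dim\ker(E-\rho)^{k_\rho}=k_\rho$ since the equation $(E-\rho)^{k_\rho}x=0$ expresses each $x_{j+k_\rho}$ through the preceding $k_\rho$ entries. Comparing dimensions shows these sequences exhaust the kernel, so, invoking the direct sum decomposition and $\sum_\rho k_\rho=n$, every complex solution of \eqref{rec} has the form $x_j=\sum_\rho P_\rho(j)\rho^j$ with each $P_\rho$ a complex polynomial of degree $\le k_\rho-1$; conversely every such sequence solves \eqref{rec}.

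Finally I would impose that $x$ be real-valued. Complex conjugation is an $\R$-linear involution of $\ker p(E)$ carrying the summand attached to $\rho$ onto the one attached to $\xbar\rho$, so $x_j=\sum_\rho P_\rho(j)\rho^j$ is real if and only if it is fixed by conjugation, i.e. $P_{\lambda_k}$ is a real polynomial $A_k$ with $\deg A_k\le l_k-1$ and $P_{\xbar{\mu_k}}=\xbar{P_{\mu_k}}$ for every $k$. Writing $\mu_k=|\mu_k|\e^{i\phi_k}$ and $P_{\mu_k}=\tfrac12(B_k-iC_k)$ with $B_k,C_k$ real polynomials of degree $\le m_k-1$, the pair of conjugate summands belonging to $\mu_k$ and $\xbar{\mu_k}$ equals $2\operatorname{Re}\big(P_{\mu_k}(j)\mu_k^j\big)=|\mu_k|^j\big(B_k(j)\cos j\phi_k+C_k(j)\sin j\phi_k\big)$, which is exactly the formula in the statement; conversely each such expression is plainly a real solution of \eqref{rec}. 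The one step that is not a routine manipulation of the shift operator $E$ and of $\mu_k=|\mu_k|\e^{i\phi_k}$ --- and hence the main obstacle --- is the linear-algebra bookkeeping behind the direct sum decomposition and the spanning property of $\ker(E-\rho)^{k_\rho}$.
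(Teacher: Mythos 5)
Your proof is correct. Note that the paper itself offers no proof of this statement: it is recalled as a classical fact with a reference to \cite[\S 3.2]{J1996}, and the argument given there is essentially the one you reconstruct. Your route --- the primary (B\'ezout) decomposition of $\ker p(E)$ under the shift operator, the dimension count identifying $\ker(E-\rho)^{k_\rho}$ with the sequences $j\mapsto w(j)\rho^j$, $\deg w\le k_\rho-1$, and the conjugation-invariance argument extracting the real solutions --- is the standard textbook proof, and all the steps you flag as requiring care (injectivity of $w\mapsto(w(j)\rho^j)_j$ for $\rho\neq0$, the projection $v(E)q(E)$, the identity $2\operatorname{Re}\bigl(P_{\mu_k}(j)\mu_k^j\bigr)=|\mu_k|^j\bigl(B_k(j)\cos j\phi_k+C_k(j)\sin j\phi_k\bigr)$) check out.
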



\section{Cases $k=0$ and $k=n$}\label{case0n}
In the case where $k=0$ equation \eqref{e} takes the form
\begin{equation}\label{e0}
x=\frac{1}{n}\sum_{i=1}^nf^i(x).
\end{equation}
Its characteristic equation is of the form 
\begin{equation}\label{che0}
\sum_{i=1}^nr^i-n=0
\end{equation} 
which, after multiplication by $r-1$, can be written as $r^{n+1}-(n+1)r+n=0$. Therefore, by the previous remarks, if a~function $f$ satisfies \eqref{e0}, then it also satisfies the equation 
$f^{n+1}(x)-(n+1)f(x)+nx=0$. Now, applying \cite[Theorem 5.7]{DM2016}, we obtain the following result.
\begin{theorem}\label{thm41}
If $f\in C(I)$ satisfies \eqref{e0}, then:
\begin{enumerate}[\rm(i)]
\item $f(x)=x$ in the case where  $n$ is an odd number or in the case where $n$ is an even number and $I\neq\R$;
\item $f(x)=x$ or $f(x)=r_0x+c$, where $c$ is a constant and $r_0$ stands for the negative root of equation \eqref{che0}, in the case where $n$ is an even number and $I=\R$.
\end{enumerate}	
\end{theorem}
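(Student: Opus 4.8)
The plan is to pass from \eqref{e0} to a polynomial-like iterative equation of the form \eqref{eq} whose characteristic polynomial can be analysed completely, and then to invoke \cite[Theorem 5.7]{DM2016}. First I would record the reduction already signalled above: multiplying the characteristic polynomial $g(r):=r+r^2+\cdots+r^n-n$ of \eqref{e0} by $r-1$ produces $p(r):=r^{n+1}-(n+1)r+n$, so by the divisibility principle recalled in Section~\ref{polandrec} (cf.\ \cite{MZ1998}) every $f\in C(I)$ satisfying \eqref{e0} also satisfies
\[
f^{n+1}(x)-(n+1)f(x)+nx=0,\qquad x\in I.
\]
Since the constant term of $p$ equals $n\neq0$, any such $f$ is injective, hence strictly monotone.

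Next I would analyse the roots of $p$. From $p(1)=0$, $p'(r)=(n+1)(r^n-1)$ (so $p'(1)=0$) and $p''(1)=n(n+1)\neq0$ one obtains that $1$ is a root of $p$ of multiplicity exactly $2$; moreover $p=(r-1)g$ with $g'(1)=\frac{n(n+1)}{2}\neq0$, so $1$ is a \emph{simple} root of $g$. The sign of $p'$ then pins down the shape of $p$: if $n$ is odd, $p$ decreases on $(-\infty,1)$ and increases on $(1,\infty)$, whence $p\ge0$ with equality only at $1$ and $1$ is the only real root of $p$; if $n$ is even, $p$ increases on $(-\infty,-1)$, decreases on $(-1,1)$ and increases on $(1,\infty)$, and since $p(-1)=2n>0$ while $p(r)\to-\infty$ as $r\to-\infty$, the polynomial $p$ has exactly one further real root $r_0$, which is simple and satisfies $r_0<-1$; being $\neq1$, this $r_0$ is the negative root of \eqref{che0}. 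In either case every other root of $p$ is non-real.

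Then I would feed this into \cite[Theorem 5.7]{DM2016}: after checking that the roots of $p$ meet its hypotheses, that theorem reduces the order and forces every continuous solution of the equation $f^{n+1}(x)-(n+1)f(x)+nx=0$ to be affine, $f(x)=rx+c$, with $r$ a real root of $p$ and $f(I)\subset I$. It remains to retain only those affine maps that actually solve \eqref{e0}: substituting $f(x)=x+c$ into \eqref{e0} forces $c=0$ (since $\sum_{i=1}^n i\neq0$), whereas substituting $f(x)=r_0x+c$ gives an identity valid for every $c\in\R$ (since $g(r_0)=0$); and $x\mapsto r_0x+c$ is decreasing with $|r_0|>1$, so it carries $I$ into $I$ only when $I=\R$. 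Putting these together yields (i) and (ii), the listed maps having been verified to be solutions along the way.

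I expect the main obstacle to be hidden inside \cite[Theorem 5.7]{DM2016}: reducing the equation $f^{n+1}(x)-(n+1)f(x)+nx=0$ from order $n+1$ to first (affine) order is exactly where the non-linearity of the operator $f\mapsto f^{n+1}$ bites, and it is the only non-routine step. Once it is granted, the root analysis of $p$ and the back-substitution discarding the spurious translations introduced by the extra factor $r-1$ are straightforward.
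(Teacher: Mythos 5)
Your proposal is correct and follows essentially the same route as the paper: multiply the characteristic polynomial by $r-1$ to pass to $f^{n+1}(x)-(n+1)f(x)+nx=0$ and then invoke \cite[Theorem 5.7]{DM2016}. You additionally spell out the root analysis of $r^{n+1}-(n+1)r+n$ and the back-substitution eliminating the translations introduced by the extra factor $r-1$, details the paper leaves implicit; these check out.
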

	
Now we shall consider the case where $k=n$. Then equation \eqref{e} takes the form
\begin{equation}\label{en}
f^n(x)=\frac{1}{n}\sum_{i=0}^{n-1}f^i(x)
\end{equation}
and its characteristic equation is of the form 
\begin{equation}\label{chen}
nr^n-\sum_{i=0}^{n-1}r^i=0.
\end{equation} 
	
We will need the following lemma which is an elaboration of \cite[Theorem 9]{N1974}; the crucial point is that an unknown function acts on a~subinterval of the real line.
\begin{lemma}\label{lem51}
Assume that $f\in C(I)$ satisfies
\begin{equation}\label{eqrho}
f^2(x)-(1+\rho)f(x)+\rho x=0.
\end{equation}
If either
\begin{enumerate}
\item[\rm (i)] $\rho\in(-1,0)$. If 
\end{enumerate}
or
\begin{enumerate}
\item[\rm (ii)] $\rho\in(-\infty,-1)$ and $f$ is a surjection,
\end{enumerate}
then $f(x)=x$ or $f(x)=\rho x+c$, where $c$ is a constant.
\end{lemma}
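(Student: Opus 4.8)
The plan is to extract everything from the linear recurrence associated with \eqref{eqrho}. Its characteristic polynomial factors as $r^2-(1+\rho)r+\rho=(r-1)(r-\rho)$, so the characteristic roots are $1$ and $\rho$; since the constant term $\rho$ is nonzero, I would first invoke \cite[Lemma 2.1]{DM2016} to see that any continuous solution $f$ of \eqref{eqrho} is injective, hence strictly monotone. For a fixed $x_0\in I$ the orbit $x_m:=f^m(x_0)$ then satisfies $x_{m+2}=(1+\rho)x_{m+1}-\rho x_m$, so by Theorem \ref{thm31} it has the form $x_m=a+b\rho^m$ for suitable reals $a,b$ depending on $x_0$. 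I will use two easy facts about such sequences: since $\rho\notin\{-1,1\}$ is not a root of unity, $(x_m)$ can be periodic only when it is constant (so $f$ has no $2$-cycle and therefore the same fixed points as $f^2$); and when $|\rho|<1$ the orbit converges to $a$.

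Next I would split according to the monotonicity of $f$. If $f$ is increasing, the orbit $(x_m)$ is monotone, whereas $x_{m+1}-x_m=b\rho^m(\rho-1)$ alternates in sign with $m$ (because $\rho<0$) unless $b=0$; hence $b=0$, so $f(x_0)=x_1=x_0$, and as $x_0$ was arbitrary $f=\mathrm{id}$. This covers the increasing case under either assumption (i) or (ii), so from here on $f$ is decreasing.

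Now suppose $f$ is decreasing and $\rho\in(-1,0)$. First I would note that $f-\mathrm{id}$ is strictly decreasing and continuous with $f(I)\subseteq I$, hence has a unique zero $p\in I$ (by the intermediate value theorem, applied on a compact interval, or to one-sided limits at the endpoints in general). Then, fixing $x_0\in I$ and using $|\rho|<1$, the orbit converges, $x_m\to a$; a short argument shows $a\in I$ (if $a$ were an endpoint outside $I$, then since $\rho^m$ changes sign one of $(x_{2m})$, $(x_{2m+1})$ would eventually exit $I$ unless $b=0$, i.e.\ unless $a=x_0\in I$). By continuity $f^2(a)=a$, so $a=p$ by the first paragraph. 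Reading off $m=0,1$ from $x_m=p+(x_0-p)\rho^m$ gives $f(x_0)=\rho x_0+(1-\rho)p$, which is exactly $f(x)=\rho x+c$ with $c=(1-\rho)p$.

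Finally, for assumption (ii), with $\rho\in(-\infty,-1)$ and $f$ surjective, $f$ is a continuous bijection, so $g:=f^{-1}\in C(I)$. Substituting $f^{-2}(x)$ for $x$ in \eqref{eqrho} and dividing by $\rho$ shows that $g$ satisfies \eqref{eqrho} with $\rho$ replaced by $1/\rho\in(-1,0)$; the already proved case (i) then gives $g=\mathrm{id}$ or $g(x)=\tfrac1\rho x+c'$, and inverting yields $f=\mathrm{id}$ or $f(x)=\rho x-\rho c'$, as required. I expect the only delicate step to be the decreasing sub-case of (i): keeping the orbit inside $I$ and pinning its limit to the fixed point $p$, together with the exclusion of $2$-cycles. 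Everything else is recurrence bookkeeping and, for (ii), a routine duality reduction.
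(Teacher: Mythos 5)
Your proof is correct, and it follows the same overall skeleton as the paper's (solve the associated recurrence to get $x_m=a+b\rho^m$, split on the monotonicity of $f$, and obtain (ii) from (i) by duality), but both sub-cases are handled by genuinely different tactics. For increasing $f$ the paper simply cites \cite[Theorem 4.1 and Remark 4.4]{DM2016}, whereas your alternating-differences argument ($x_{m+1}-x_m=b\rho^m(\rho-1)$ cannot alternate in sign along a monotone orbit unless $b=0$) is self-contained and more elementary. For decreasing $f$ the paper deliberately avoids your main technical burden: it never shows that the orbit limit lies in $I$ or that $f$ has a fixed point there. Instead it proves the limit $\lim_m f^m(x)$ is the \emph{same} constant for all $x$ (because $(f^m(x)-f^m(y))_m$ is anti-monotone and convergent, hence tends to $0$) and then passes to the limit in the telescoped identity $f^{m+1}(x)-\rho f^m(x)=f(x)-\rho x$, which yields $f(x)=\rho x+(1-\rho)\lim_m f^m(x)$ directly. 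Your route instead pins the limit to the unique fixed point $p$ of $f$, which requires the two extra steps you correctly flag as delicate — existence/uniqueness of $p$ via one-sided limits of $f-\mathrm{id}$ at the endpoints of a possibly non-compact $I$, and the "orbit approaches $a$ from both sides, so $a\in I$" argument. Both of these go through (note that $f(a)=\lim_m f(x_m)=\lim_m x_{m+1}=a$ gives $a=p$ even more directly than your detour through $f^2(a)=a$ and the absence of $2$-cycles), so the proof is complete; the paper's version is just slightly leaner on the boundary bookkeeping, while yours replaces an external citation by an explicit argument.
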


\begin{proof}
(i) In the case where $f$ is increasing we can apply \cite[Theorem 4.1 and Remark 4.4]{DM2016} to conclude that $f(x)=x$. Therefore, assume $f$ is decreasing.
		
Fix $x\in I$ and put $x_0=x$ and $x_m=f(x_{m-1})$ for $m\in\N$. It is easy to see that the sequence $(x_m)_{m\in\N_0}$ satisfies the relation
$$
x_{m+2}-(1+\rho)x_{m+1}+\rho x_m=0\quad\mbox{ for }m\in\N_0. 
$$
By Theorem \ref{thm31} we have $x_m=A+B\rho^m$ for some constants $A$ and $B$ (depending on $x$). Consequently, there exists a~finite limit $\lim_{m\to\infty}f^m(x)$.
		
Fix $x,y\in I$. Since $f$ is decreasing, the sequence $(f^m(x)-f^m(y))_{m\in\mathbb N_0}$ is anti-monotone. Moreover, it is convergent, so it must converge to zero. It means that the limit $\lim_{m\to\infty}f^m(x)$ does not depend on $x$. 
		
We can rewrite equation (\ref{eqrho}) as $f^2(x)-\rho f(x)=f(x)-\rho x$. Putting $f(x)$ in place of $x$ we obtain
$$
f^3(x)-\rho f^2(x)=f^2(x)-\rho f(x)=f(x)-\rho x
$$	
and, by a simple induction, we obtain
$$
f^{m+1}(x)-\rho f^m(x)=f(x)-\rho x\quad\mbox{ for all }m\in\N\mbox{ and }x\in I.
$$
Passing with $m$ to $\infty$ gives
$$
f(x)=\rho x+(1-\rho)\lim_{m\to\infty}f^m(x)\quad\mbox{ for every }x\in I. 
$$
Setting $c=(1-\rho)\lim_{m\to\infty}f^m(x)$ ends the proof of assertion (i).

(ii) It is enough to apply assertion (i) to the dual equation to \eqref{eqrho}.
\end{proof}
	
\begin{theorem}\label{thm52}
If $f\in C(I)$ satisfies \eqref{en}, then:
\begin{enumerate}[\rm(i)]
\item $f(x)=x$ in the case where $n$ is an odd number;
\item $f(x)=x$ or $f(x)=r_0x+c$, where $c$ is a constant and $r_0$ stands for the negative root of equation \eqref{chen}, in the case where $n$ is an even number.
\end{enumerate}
\end{theorem}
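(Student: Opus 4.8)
Since $a_0=-1\neq0$, any continuous solution of \eqref{en} is injective, hence strictly monotone, and I would split the proof accordingly.

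\textbf{Increasing $f$ (kills the first alternative for all $n$).} Here I would argue directly from the averaging structure, without Theorem \ref{thm31}. If $f(x_0)\neq x_0$ for some $x_0$, then, $f$ being increasing, the inequality $f(x_0)\gtrless x_0$ propagates to $f^0(x_0),f^1(x_0),\dots,f^n(x_0)$, which is therefore strictly monotone in the index; consequently the right-hand side of \eqref{en}, being an average of $f^0(x_0),\dots,f^{n-1}(x_0)$, lies strictly on the opposite side of $f^{n-1}(x_0)$ from $f^n(x_0)$, contradicting \eqref{en}. Hence an increasing solution must be the identity.

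\textbf{Decreasing $f$: reduction and root analysis.} Multiplying the left-hand side of \eqref{chen} by $r-1$ yields $q(r):=nr^{n+1}-(n+1)r^n+1$, so (by the divisibility principle recalled in the Introduction) every solution of \eqref{en} also satisfies $nf^{n+1}-(n+1)f^n+\mathrm{id}=0$; equivalently, each orbit $x_m=f^m(x)$ obeys the recurrence \eqref{rec} with characteristic polynomial $q$. I would then record the root picture of $q$: $(\mathrm a)$ $r=1$ is a root of multiplicity exactly $2$ (as $q(1)=q'(1)=0$, $q''(1)=n(n+1)$); $(\mathrm b)$ $q$ has no root of modulus $1$ other than $r=1$, since $|nr^{n+1}+1|=n+1$ on $|r|=1$ forces, by the equality case of the triangle inequality, $r^{n+1}=1$, and then $(n+1)r^n=n+1$ gives $r^n=1$, whence $r=1$; $(\mathrm c)$ for $n$ even $q$ has exactly one negative real root $r_0$, lying in $(-1,0)$ (because $q$ is increasing on $(-\infty,0)$ with $q(-1)=-2n<0<1=q(0)$), while for $n$ odd $q(r)>0$ for all $r<0$; $(\mathrm d)$ $r_0$ has strictly smaller modulus than every other root of $q$. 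Step $(\mathrm d)$ is the key: writing $q(r)=r^{n+1}\tilde q(1/r)$ with $\tilde q(r)=r^{n+1}-(n+1)r+n$, the roots of $q$ are the reciprocals of those of $\tilde q$, and any root $\mu$ of $\tilde q$ satisfies $|\mu|^{n+1}\le(n+1)|\mu|+n$, with equality only when $\mu$ is negative real; since $R\mapsto R^{n+1}-(n+1)R-n$ has a unique positive zero, that zero is $|1/r_0|$ and all other roots of $\tilde q$ have strictly smaller modulus, i.e. $r_0$ is the unique root of $q$ of smallest modulus.

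\textbf{Decreasing $f$: the orbit argument and conclusion.} By Theorem \ref{thm31} each orbit has the form $x_m=(A+Bm)+Cr_0^m+\sum_k(D_k\cos m\phi_k+E_k\sin m\phi_k)|\mu_k|^m$ (drop the $r_0$-term when $n$ is odd). Since $f$ is decreasing, $(x_m)$ is anti-monotone, while $x_{2m}=(f^2)^m(x)$ is monotone because $f^2$ is increasing. A mode with $|\mu_k|>|r_0|$ and nonzero coefficient is genuinely oscillatory and dominates the tail of the orbit, contradicting either the monotonicity of $(x_{2m})$ or the anti-monotonicity of $(x_m)$ (a Cesàro-mean argument handles equal-modulus complex roots), and $B\neq0$ produces a monotone, non-anti-monotone tail; hence all $\mu_k$-modes vanish and $B=0$. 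Thus $x_m=A+Cr_0^m$ for $n$ even and $x_m=A$ for $n$ odd, with $A=\lim_m f^m(x)$. For $n$ odd every orbit is then constant, i.e. $f(x)=x$, contradicting that $f$ is decreasing; so there is no decreasing solution and part (i) follows. For $n$ even, reading off $x$, $f(x)$, $f^2(x)$ from $x_m=A+Cr_0^m$ gives $f^2(x)-(1+r_0)f(x)+r_0x=0$ identically, and since $r_0\in(-1,0)$, Lemma \ref{lem51}(i) yields $f(x)=x$ or $f(x)=r_0x+c$, proving part (ii).

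\textbf{Main obstacle.} The substantive points are $(\mathrm d)$ — that $r_0$ is the root of $q$ of least modulus, which is exactly what makes the anti-monotonicity leverage close the argument — together with the careful elimination of all oscillatory modes when $I$ is unbounded and when complex roots coincide in modulus; everything else is the divisibility reduction, the elementary increasing-case computation, and the already-established Lemma \ref{lem51}.
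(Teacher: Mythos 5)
Your overall skeleton coincides with the paper's: multiply the characteristic polynomial by $r-1$ to get $q(r)=nr^{n+1}-(n+1)r^n+1$, locate its real roots ($1$ double, plus $r_0\in(-1,0)$ exactly when $n$ is even), show that $|r_0|$ is strictly separated from the moduli of the non-real roots, reduce to $f^2-(1+r_0)f+r_0\,\mathrm{id}=0$, and finish with Lemma \ref{lem51}(i). Your root analysis (a)--(d) is correct, and step (d) via the reciprocal polynomial $\tilde q(r)=r^{n+1}-(n+1)r+n$ is a clean alternative to the paper's direct triangle-inequality estimate at $|z|=-r_0$; the elementary averaging argument for increasing $f$ is also correct, though redundant, since Lemma \ref{lem51}(i) already covers both monotonicity types.

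The genuine gap is the order-reduction step. The paper does not prove it by an orbit argument at all: it invokes \cite[Theorem 3.6]{DM2016}, whose hypotheses are exactly the root data you established. Your replacement --- ``a mode with $|\mu_k|>|r_0|$ dominates the tail and contradicts (anti-)monotonicity'' --- does not go through as stated. Modes with $|\mu_k|\in(|r_0|,1)$ do not dominate the tail of the orbit (the constant $A$ does); to kill them you must show that the dominant part of $x_{2m}-A$, a sum of several integer-sampled trigonometric terms of equal modulus and possibly with polynomial coefficients $B_k(j),C_k(j)$ (you implicitly assume all non-real roots are simple, which is not verified), changes sign infinitely often and is not swamped by the lower-order modes --- this sign-change lemma is the nontrivial core, and ``a Ces\`aro-mean argument handles equal-modulus complex roots'' is an assertion, not a proof. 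Likewise the interaction between a nonzero $Bm$ term and oscillatory modes of modulus $>1$ on an unbounded $I$ needs an actual estimate. You have correctly identified this as the main obstacle; the proof is complete only once you either carry out that analysis in full or, as the paper does, cite \cite[Theorem 3.6]{DM2016}, for which your Lemma-level work already verifies the hypotheses.
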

	
\begin{proof}
Multiplying \eqref{chen} by $r-1$ gives
\begin{equation}\label{chenn}
nr^{n+1}-(n+1)r^n+1=0.
\end{equation}
Clearly, equations \eqref{chen} and \eqref{chenn} have the same roots, wherein multiplicity of root $1$ is greater by $1$ in the second equation. Define a~function $g\colon\R\to\R$ by
$$
g(r)=nr^{n+1}-(n+1)r^n+1.
$$
Since $g(1)=g^\prime(1)=0$ and $g^{\prime\prime}(1)\neq0$, equation \eqref{chenn} has a~double root $1$.
		
In the case of odd $n$, $g$ has the only extremum at $1$, so it is the only real root of $g$. In the case of even $n$, $g$ has extrema at $0$ and $1$, so it also has a negative root $r_0$. Since $g(0)g(-1)<0$, we have $r_0>-1$ and since $g^\prime(r_0)>0$, it follows that $r_0$ is a single root. 
		
We shall show that if $n$ is even and if $z$ is a non-real root of \eqref{chenn}, then $|z|>-r_0$. Suppose, for an indirect proof, the contrary. If $|z|=-r_0$, then we would have
$$
1=|n(-z)^{n+1}+(n+1)z^n|\le n|z|^{n+1}+(n+1)|z|^n=-nr_0^{n+1}+(n+1)r_0^n=1,
$$
which means that $n(-z)^{n+1}$ and $(n+1)z^n$ are linearly dependent over $\R$, and consequently $z\in\R$; a contradiction. If $|z|<-r_0$, then we would have
$$
1=|n(-z)^{n+1}+(n+1)z^n|\le n|z|^{n+1}+(n+1)|z|^n<-nr_0^{n+1}+(n+1)r_0^n=1,
$$
which also is a contradiction.
		
Now by \cite[Theorem 3.6]{DM2016} equation \eqref{en} is equivalent to the equation $f(x)-x=0$ in the case of odd $n$, which proves assertion {\rm (i)}, and to the equation $f^2(x)-(r_0+1)f(x)+r_0x=0$ in the case of even $n$. Now assertion {\rm (ii)} follows from  assertion (i) of Lemma \ref{lem51}.
\end{proof}


\section{Case $0<k<n$}\label{cases}
In order to determine continuous solutions to equation \eqref{e} in the case where $0<k<n$ we need information on roots of its characteristic equation which is of the form 
\begin{equation}\label{c}
(n+1)r^k=\sum_{i=0}^n r^i.
\end{equation}
We start with a general observation on complex roots of equation \eqref{c}.
	
\begin{lemma}\label{lm1}
All complex roots of equation \eqref{c} are in modulus less than $2n+1$. 
\end{lemma}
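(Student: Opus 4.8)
The plan is to bound the modulus of any complex root of \eqref{c} by working with the polynomial obtained after clearing the geometric sum. Suppose $z\in\mathbb C$ satisfies $(n+1)z^k=\sum_{i=0}^n z^i$. Multiplying both sides by $z-1$ gives
$$
(n+1)z^{k+1}-(n+1)z^k=z^{n+1}-1,
$$
that is, $z^{n+1}=(n+1)z^{k+1}-(n+1)z^k+1$. (Note $z=1$ is a root of the original equation and causes no trouble, so we may assume $z\ne 1$ when we want to divide, but the displayed polynomial identity holds for every root anyway.) The idea is now to argue by contradiction: assume $|z|\ge 2n+1$ and show the right-hand side cannot have modulus $|z|^{n+1}$.

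First I would record the elementary consequences of $|z|\ge 2n+1$: in particular $|z|\ge 3$, so the powers of $|z|$ grow fast. Taking moduli in $z^{n+1}=(n+1)z^{k+1}-(n+1)z^k+1$ and using the triangle inequality yields
$$
|z|^{n+1}\le (n+1)|z|^{k+1}+(n+1)|z|^{k}+1.
$$
Since $0<k<n$ we have $k+1\le n$, hence $|z|^{n+1}\ge |z|\cdot|z|^{k+1}$ and also $|z|^{k+1}\ge|z|\cdot|z|^{k}\ge|z|$. The plan is to divide through by $|z|^{k}$ and reduce to showing that $|z|^{n+1-k}>(n+1)|z|+(n+1)+1$ is impossible — i.e., that for $t:=|z|\ge 2n+1$ one has $t^{2}\le t^{n+1-k}\le (n+1)t+(n+2)$, which plainly fails because $t^2\ge(2n+1)^2>(n+1)(2n+1)+(n+2)$ for all $n\ge 2$. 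This contradiction gives $|z|<2n+1$.

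The only genuinely delicate point is making sure the chain of inequalities is valid in the extreme case $k+1=n$ (so $n+1-k=2$), since then no slack is gained from the exponent and one must rely entirely on the coefficient estimate $t^{2}>(n+1)t+(n+2)$ at $t\ge 2n+1$; this is where I would do the one explicit computation, checking $(2n+1)^2-(n+1)(2n+1)-(n+2)=2n^2-n-2>0$ for $n\ge 2$. Everything else is routine bounding, and for smaller $k$ the exponent $n+1-k\ge 3$ only makes the gap larger, so the same bound $2n+1$ works uniformly. (In fact this argument shows the much stronger bound $|z|<n+2$; I would state the proof so that the constant can be tightened if convenient, but $2n+1$ suffices for the applications in the next section.)
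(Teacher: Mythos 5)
Your argument is correct and rests on the same basic idea as the paper's proof, namely a triangle-inequality estimate on the characteristic equation; the only difference is that you first multiply by $z-1$ to work with the cleared form $z^{n+1}=(n+1)z^{k+1}-(n+1)z^k+1$, whereas the paper isolates $z^n$ in the original equation, assumes $|z|>1$ without loss of generality, and divides by $|z|^{n-1}$ to get $|z|<(n+1)+n=2n+1$ in two lines. Your route is slightly longer but has the bonus that it yields the sharper bound $|z|\le n+2$ (indeed $t^2-(n+1)t-(n+2)=(t-(n+2))(t+1)$), which is stronger than what the lemma asserts. Two small points: your final expansion should read $(2n+1)^2-(n+1)(2n+1)-(n+2)=2n^2-2$, not $2n^2-n-2$ (still positive for $n\ge 2$, so nothing breaks); and the step $|z|^{n+1-k}\ge|z|^2$ uses both $k\le n-1$ and $|z|\ge 1$, which you do have under the standing hypothesis $0<k<n$ and the contradiction assumption $|z|\ge 2n+1$, but it is worth saying explicitly.
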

	
\begin{proof}
Assume that $z\in\mathbb C$ is a root of equation \eqref{c}. Without loss of generality we can assume that $|z|>1$. Then
$$
|z|^n=\left|(n+1)z^k-\sum_{i=0}^{n-1}z^i\right|\le (n+1)|z|^k+\sum_{i=0}^{n-1}|z|^i,
$$
and hence 
$$|z|\le(n+1)|z|^{k-n+1}+\sum_{i=0}^{n-1}|z|^{i-n+1}<(n+1)+n=2n+1.$$
This completes the proof.
\end{proof} 
	
To obtain more precise information on real roots of equation \eqref{c} define functions $g,G\colon\R\to\R$ by
$$
g(r)=r^{n-k+1}-(k+1)r+k,\quad G(r)=r^{n+1}-(n+1)r^k(r-1)-1.
$$
It is easy to see that 
\begin{equation}\label{fg10}
G(1)=g(1)=0,\quad g(0)>0\quad\mbox{and}\quad G(0)<0.
\end{equation}
Moreover, the set of all real roots of equation \eqref{c} coincides with the set of all solutions to the equation 
\begin{equation}\label{cc}
G(r)=0
\end{equation}
and
$$ 
G^\prime(r)=(n+1)r^{k-1}g(r). 
$$
	
Note also that in the case where $n-k$ is odd the function $g$ is strictly convex with the global minimum at the point 
$$
r_{\min}=\left(\frac{k+1}{n-k+1}\right)^{1/(n-k)}
$$
and, by \eqref{fg10}, we have
\begin{equation}\label{gmin}
g(r_{\min})<0\quad\mbox{if and only if}\quad r_{\min}\neq 1.
\end{equation}
However, in the case where $n-k$ is even the function $g|_{(0,+\infty)}$ is strictly convex with the global minimum at the point $r_{\min}$ and \eqref{gmin} holds, whereas the function $g|_{(-\infty,0)}$ is strictly concave with the global maximum at the point $r_{\max}=-r_{\min}$ and, by \eqref{fg10}, we have
\begin{equation}\label{gmax}
g(r_{\max})>0.
\end{equation}
	
Described properties of functions $g$ and $G$ will play a key role in the next lemma which will allow us to locate real roots of equation \eqref{c}.
	
\begin{lemma}\label{lm2}
Assume that $0<k<n$.
\begin{enumerate}[\rm(i)]
\item If $k$ is an odd number and $n=2k$, then equation \eqref{c} has one real root: $r_1=1$. 
\item If $k$ is an odd number and $n$ is an even number with $n<2k$, then equation \eqref{c} has two real roots: $r_1=1$ and $r_2\in(1,2n+1)$.
\item If $k$ is an odd number and $n$ is an even number  with $n>2k$, then equation \eqref{c} has two real roots: $r_1=1$ and $r_2\in(0,1)$.
\item If $k$ and $n$ are odd numbers with $n<2k$, then equation \eqref{c} has three real roots: $r_1=1$, $r_2\in(1,2n+1)$ and $r_3\in(-2n-1,-1)$. 
\item If $k$ and $n$ are odd numbers with $n>2k$, then equation \eqref{c} has three real roots: $r_1=1$, $r_2\in(0,1)$ and $r_3\in(-2n-1,-1)$. 
\item If $k$ is an even number and $n$ is an odd number with $n<2k$, then equation \eqref{c} has three real roots: $r_1=1$, $r_2\in(1,2n+1)$ and $r_3\in(-1,0)$. 
\item If $k$ is an even number and $n$ is an odd number with $n>2k$, then equation \eqref{c} has three real roots: $r_1=1$, $r_2\in(0,1)$ and $r_3\in(-1,0)$.
\item If $k$ is an even number  and $n=2k$, then equation \eqref{c} has three real roots: $r_1=1$, $r_2\in(-1,0)$ and $r_3\in(-2n-1,-1)$.
\item If $k$ and $n$ are even numbers with $n<2k$, then equation \eqref{c} has four real roots: $r_1=1$, $r_2\in(1,2n+1)$, $r_3\in(-1,0)$ and $r_4\in(-2n-1,-1)$. 
\item If $k$ and $n$ are even numbers with $n>2k$, then equation \eqref{c} has four real roots: $r_1=1$, $r_2\in(0,1)$, $r_3\in(-1,0)$ and $r_4\in(-2n-1,-1)$.
\end{enumerate}	
Moreover, in all the above cases $r_1,r_2,r_3,r_4$ are single roots, except the case $n=2k$ in which $r_1$ is a double root.
\end{lemma}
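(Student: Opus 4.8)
The plan is to work directly with the polynomial $G$, exploiting the factorisation $G'(r)=(n+1)r^{k-1}g(r)$ together with the shape of $g$ established above. First I record the ``anchor'' data: $G$ has degree $n+1$ with leading coefficient $1$, so $G(r)\to+\infty$ as $r\to+\infty$ and $G(r)\to+\infty$ or $-\infty$ as $r\to-\infty$ according as $n$ is odd or even; we already know $G(0)<0$ and $G(1)=0$; and the one-line computations $g(-1)=(-1)^{n-k+1}+2k+1>0$ and $G(-1)=(-1)^{n+1}+2(-1)^k(n+1)-1$ show that $G(-1)>0$ when $k$ is even and $G(-1)<0$ when $k$ is odd. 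Together with $g(0)=k>0$ and $g(1)=0$, these are the only values of $G$ and $g$ that will be needed.

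Next I observe that the critical points of $G$ are exactly $r=0$ (present only if $k\ge 2$) and the real zeros of $g$, and that the latter have already been pinned down: $g$ always has the zero $1$; in addition it has a zero $s\in(0,1)$ if $n>2k$, a zero $s\in(1,\infty)$ if $n<2k$, and --- precisely when $n-k$ is even --- a zero $s'\in(-\infty,-1)$ (that $s'<-1$ follows from $g(-1)>0$ and the concavity of $g$ on $(-\infty,0)$); when $n=2k$ the zero $1$ is double and $g$ has no other real zero. From the sign of $r^{k-1}$ (which depends only on the parity of $k$: nonnegative if $k$ is odd, of the sign of $r$ if $k$ is even) and the sign of $g$ on each interval between consecutive critical points one reads off the intervals of monotonicity of $G$. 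On each such interval $G$ has at most one zero, and whether it has one is decided by comparing the values of $G$ at the endpoints; the values of $G$ at the interior critical points $s,s'$ are never $0$, and their signs are forced by monotonicity and the neighbouring anchors (for instance, if $G$ decreases on $(1,s)$ then $G(s)<G(1)=0$, and if $G$ decreases on $(s',-1)$ then $G(s')>G(-1)>0$). Carrying this out in the ten parity/size regimes produces exactly the stated lists; an encouraging bookkeeping check is that, across the ten cases, the number of real roots equals $1$ plus one for each of the three conditions ``$n\ne 2k$'', ``$k$ is even'', ``$n-k$ is even'' that is satisfied, the extra positive root $r_2$ (present exactly when $n\ne2k$) lying in $(1,2n+1)$ if $n<2k$ and in $(0,1)$ if $n>2k$. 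The bound $2n+1$ here, and the symmetric bound for the negative root in $(-2n-1,-1)$, come from Lemma \ref{lm1}.

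For the multiplicity statement, write $G(r)=(1-r)\big((n+1)r^k-\sum_{i=0}^n r^i\big)$, so that the multiplicity of a root $r_i\ne 1$ of \eqref{c} equals its multiplicity as a zero of $G$ (since $1-r_i\ne 0$), while for $r_1=1$ it equals the multiplicity of $1$ as a zero of $G$ minus one. A real root $r_i\ne 1$ of \eqref{c} is not a critical point of $G$ (we have $G(0)<0\ne 0$ and $G(s),G(s')\ne 0$), hence is a simple zero of $G$ and a simple root of \eqref{c}. For $r_1=1$ one has $G(1)=G'(1)=0$ and $G''(1)=(n+1)g'(1)=(n+1)(n-2k)$, so $1$ is a double zero of $G$, hence a simple root of \eqref{c}, unless $n=2k$; in that case $g'(1)=0$ while $g''(1)=k(k+1)>0$, so $1$ is a triple zero of $G$ and therefore a double root of \eqref{c}.

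The main obstacle I anticipate is not conceptual but organisational: one must correctly determine, in each of the ten regimes, the sign pattern of $g$ (hence the monotonicity pattern of $G$) and then track the sign of $G$ through all the anchor points $-\infty$, $-1$, $0$, $1$, $+\infty$ and the interior critical points, getting every parity and every inequality direction right. Grouping the ten cases according to the parity of $k$ (which governs $\operatorname{sign}r^{k-1}$) and whether $n-k$ is even (which governs the existence of the negative zero $s'$ of $g$), and then sub-splitting by $n$ versus $2k$, reduces this to a manageable number of essentially identical computations.
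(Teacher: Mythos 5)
Your proposal is correct and follows essentially the same route as the paper: both analyse the sign of $G'(r)=(n+1)r^{k-1}g(r)$ using the convexity/concavity of $g$, read off the monotonicity pattern of $G$ between its critical points, and locate the roots from the anchor values $G(0)<0$, $G(1)=0$, the sign of $G(-1)$ and the behaviour at $\pm\infty$, with Lemma \ref{lm1} supplying the bounds $\pm(2n+1)$. The remaining work is exactly the routine ten-case bookkeeping you describe, and your multiplicity argument (the roots $r_i\neq 1$ are simple because they are not critical points of $G$, together with $G''(1)=(n+1)(n-2k)$ and $G'''(1)\neq 0$ when $n=2k$) matches the paper's.
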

	
\begin{proof}
We first observe that if $k$ is odd, then for every $r\in\R\setminus\{0\}$ we have
$$ 
G^\prime(r)>0\iff g(r)>0. 
$$
		
(i) The assumption $n=2k$ implies $r_{\min}=1$. It yields $g(r)>g(r_{\min})=0$ for every $r\neq r_{\min}$. Consequently, the function $G$ is strictly increasing, which jointly with \eqref{fg10} shows that $r_{\min}$ is the unique real root of equation \eqref{c}. 
\smallskip
		
(ii) The assumption $n<2k$ implies $r_{\min}>1$. We conclude that there exists exactly one $t_0\in\R\setminus\{1\}$ such that $g(t_0)=0$ and, moreover, $t_0>r_{\min}$. Consequently, the function $G$ has exactly one local maximum at the point $r_1=1$ and exactly one local minimum at the point $t_0$. This jointly with \eqref{fg10} shows that equation \eqref{c} has two real roots: $r_1=1$ and $r_2\in (t_0,\infty)$. By Lemma \ref{lm1} we have $r_2<2n+1$.
\smallskip
		
(iii) The reasoning is similar as in (ii).
\smallskip
		
(iv) The assumption $n<2k$ implies $r_{\min}>1$ and $r_{\max}<-1$. We conclude that there exists exactly one $t_0\in(0,1)\cup(1,\infty)$ such that $g(t_0)=0$ and, moreover, $t_0\in(r_{\min},\infty)$. Further, there exists exactly one $u_0\in(-\infty,0)$ such that $g(u_0)=0$ and, moreover, $u_0\in(-\infty,r_{\max})$. Consequently, the function $G$ has exactly one local maximum at the point $r_1=1$ and exactly two local minimums at points $t_0$ and $u_0$. This jointly with \eqref{fg10} shows that equation \eqref{c} has three real roots: $r_1=1$, $r_2\in(t_0,\infty)$ and $r_3\in(-\infty,u_0)$. By Lemma \ref{lm1} we have $r_2<2n+1$ and $r_3>-2n-1$.
\smallskip
		
(v) The reasoning is similar as in (iv).
\medskip
		
Now observe that if $k$ is even, then for every $r\in(0,\infty)$ we have
$$ 
G^\prime(r)>0\iff g(r)>0 
$$
and for every $r\in(-\infty,0)$ we have
$$ 
G^\prime(r)>0\iff g(r)<0. 
$$
		
(vi) The assumption $n<2k$ implies $r_{\min}>1$. We conclude that there exists exactly one $t_0\in\R\setminus\{1\}$ such that $g(t_0)=0$ and, moreover, $t_0>r_{\min}$. Consequently, the function $G$ has exactly one local maximum at the point $r_1=1$ and exactly two local minimums at points $0$ and $t_0$. This jointly with \eqref{fg10} shows that equation \eqref{c} has three real roots: $r_1=1$, $r_2\in (t_0,\infty)$ and $r_3\in(-\infty,0)$. By Lemma \ref{lm1} we have $r_2<2n+1$ and sine
\begin{equation}\label{fmax}
G(-1)>0
\end{equation}
in this case, we have $r_3>-1$.
\smallskip
		
(vii) The reasoning is similar as in (vi).
\smallskip
		
(viii) The assumption $n=2k$ implies $r_{\min}=1$ and $r_{\max}=-1$. We conclude that $g(r)>g(r_{\min})=0$ for every $r\in(0,r_{\min})\cup(r_{\min},\infty)$. Furthermore, there exists exactly one $y_0\in(-\infty,0)$ such that $g(y_0)=0$ and, moreover, $y_0\in(-\infty,r_{\max})$. Consequently, the function $G$ has exactly one local maximum at the point $y_0$ and exactly one local minimum at point $0$. This jointly with \eqref{fg10} and \eqref{fmax} shows that equation \eqref{c} has three real roots: $r_1=1$, $r_2\in(-1,0)$ and $r_3\in(-\infty,-1)$. By Lemma \ref{lm1} we have $r_3>-2n-1$.
\smallskip
		
(ix) The assumption $n<2k$ implies $r_{\min}>1$ and $r_{\max}<-1$. We conclude that there exists exactly one $t_0\in(0,1)\cup(1,\infty)$ such that $g(t_0)=0$ and, moreover, $t_0\in(r_{\min},\infty)$. Furthermore, there exists exactly one $u_0\in(-\infty,0)$ such that $g(u_0)=0$ and, moreover, $u_0\in(-\infty,r_{\max})$. Consequently, the function $G$ has exactly two local maximums at points $r_1=1$ and $u_0$ and exactly two local minimums at points $0$ and $t_0$. This jointly with \eqref{fg10} and \eqref{fmax} shows that equation \eqref{c} has four real roots: $r_1=1$, $r_2\in(t_0,\infty)$, $r_3\in(-1,0)$ and $r_4\in(-\infty,-1)$. By Lemma~\ref{lm1} we have $r_2<2n+1$ and $r_4>-2n-1$.
\smallskip
		
(x) The reasoning is similar as in (ix).

To prove the moreover part note that equality $g^\prime(r)=0$ can hold only for $r\in\{r_{\min},r_{\max}\}$. Thus $G^{\prime\prime}(r_i)=(n+1)r_i^{k-1}g'(r_i)\neq 0$ for every $i\in\{2,3,4\}$ and for $i=1$ in the case where $n\neq 2k$. If $n=2k$, then $G^{\prime\prime}(r_1)=0$ and $G^{\prime\prime\prime}(r_1)=(2k+1)(k+1)k\neq 0$.
\end{proof}
	
Now we want to obtain certain information on the location of non-real roots of equation \eqref{c}. It will be more convenient for us to consider \eqref{cc} instead of \eqref{c}.
	
\begin{lemma}\label{lm3}
Assume that $0<k<n$ and at least one of the numbers $k$ and $n$ is odd. Then for a non-real root $z$ and a real root $r_0$ of equation \eqref{c} we have $|z|\neq|r_0|$.
\end{lemma}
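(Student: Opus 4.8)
The plan is to argue by contradiction: suppose $z$ is a non-real root of \eqref{c} with $|z|=|r_0|$ for some real root $r_0$. By Lemma \ref{lm2}, the real roots are among $\{1, r_2, r_3, r_4\}$, so $|r_0|$ is one of finitely many values, and we will rule out each possibility using the triangle-inequality argument already used in the proof of Theorem \ref{thm52}. Write $\rho=|z|=|r_0|$. From \eqref{c} rearranged as $(n+1)z^k-z^n=\sum_{i=0}^{n-1}z^i$ — or, more symmetrically, group the equation so that one side is a single monomial and use that equality in the triangle inequality is rigid. The key mechanism: if $(n+1)z^k = \sum_{i=0}^{n} z^i$ and we take moduli, then $(n+1)\rho^k \le \sum_{i=0}^n \rho^i$; but $r_0$ itself satisfies $(n+1)r_0^k = \sum_{i=0}^n r_0^i$, and since the $r_i$ with $i \ge 2$ have $|r_0|$ either $>1$ or $<1$ strictly (Lemma \ref{lm2} places each of $r_2,r_3,r_4$ in an open interval avoiding $\pm 1$), we can try to force a strict inequality $(n+1)\rho^k < \sum_{i=0}^n \rho^i$ unless all the $\rho^i$ can be aligned in phase, which would force $z$ real.

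The cleaner route, and the one I would actually carry out, is to exploit the hypothesis that $k$ or $n$ is odd to get a \emph{sign} obstruction rather than just a modulus one. First I would dispose of $|r_0|=1$: here $\rho^i=1$ for all $i$, so the triangle inequality $(n+1) = |(n+1)z^k| = \left|\sum_{i=0}^n z^i\right| \le \sum_{i=0}^n |z|^i = n+1$ forces all $z^i$, $i=0,\dots,n$, to be non-negative real multiples of each other; since $z^0=1$, this gives $z^i>0$ for all $i$, in particular $z=z^1>0$, so $z$ is real — contradiction. For the remaining cases $|r_0|=|r_2|$, $|r_0|=|r_3|$, or $|r_0|=|r_4|$, I would substitute $r=\rho w$ with $|w|=1$ into a suitably $r-1$-multiplied form of \eqref{c} (i.e. into $G(r)=0$, using $G(r)=r^{n+1}-(n+1)r^k(r-1)-1$, which has the advantage that the root $1$ does not interfere), obtaining for the real root $r_0=\pm\rho$ an \emph{equality} among signed real numbers, and for $z=\rho w$ the corresponding equality among complex numbers of the same moduli; comparing, and using that $r_0$ realizes the extreme case of the triangle inequality in $G$, I would conclude that all the relevant powers of $w$ are real and of a fixed sign, hence $w=\pm 1$ and $z$ is real.

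The main obstacle is bookkeeping: $G$ has several terms ($r^{n+1}$, $-(n+1)r^{k+1}$, $(n+1)r^k$, $-1$) with signs depending on the parities of $n+1$, $k+1$, $k$, so the triangle inequality only bites after one checks that, for the actual modulus $\rho=|r_j|$ produced by Lemma \ref{lm2}, the signed terms evaluated at the \emph{real} root $r_0$ do not already cancel in pairs — which is exactly where the assumption ``at least one of $k,n$ is odd'' is needed, since with both even one can have $r^{n+1}$ and $-(n+1)r^{k+1}$ of opposite sign at a negative root, allowing genuine cancellation and hence a non-real root of equal modulus. So the heart of the argument is a careful parity case-analysis showing that under the odd hypothesis the equality case in $|G(\rho w)|\le(\text{sum of moduli})$ is rigid enough to force $w\in\{1,-1\}$. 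Everything else is the triangle-inequality rigidity lemma (equality in $|a+b|\le|a|+|b|$ forces $a,b$ to be non-negatively proportional) applied repeatedly, together with the interval information from Lemma \ref{lm2} to know which of $1$, $r_2$, $r_3$, $r_4$ actually occur.
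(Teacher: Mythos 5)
Your proposal is correct and follows essentially the same route as the paper: for positive real roots, the strict triangle inequality applied to \eqref{c} (strict because $z^0=1$ excludes a non-real $z$ from the equality case), and for negative real roots, the same rigidity applied to the $(r-1)$-multiplied form $G(r)=0$, where the parity bookkeeping you outline --- checking that the real root realizes the extreme case of the triangle inequality, which is exactly where ``at least one of $k,n$ odd'' enters --- is precisely the paper's two displayed computations. The only cosmetic differences are that you single out $|r_0|=1$ and mention $r_4$ (which cannot occur under the odd hypothesis), whereas the paper treats all positive roots uniformly via \eqref{c}.
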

	
\begin{proof}
Let $r_0\in(0,\infty)$; in this case parity of $k$ or $n$ does not play any role. Suppose, towards a contradiction, $|z|=r_0$. Then
$$
(n+1)r_0^k=(n+1)|z|^k=\left|\sum_{i=0}^n z^i\right|<\sum_{i=0}^n |z|^i=\sum_{i=0}^n r_0^i,
$$
which is impossible.
		
Now assume $r_0\in(-\infty,0)$ and as before, suppose that $|z|=-r_0$. By Lemma \ref{lm2} we need to consider only the case where $n$ is odd. 

In the case where both $k$ and $n$ are odd, we have
\begin{align*}
r_0^{n+1} &=|z|^{n+1}=\left|(n+1)z^{k+1}-(n+1)z^{k}+1\right|\\
&<(n+1)|z|^{k+1}+(n+1)|z|^{k}+1=(n+1)r_0^{k+1}-(n+1)r_0^{k}+1;
\end{align*}
a contradiction. Similarly, in the case where $k$ is even and $n$ is odd, we have
\begin{align*}
1&=\left|z^{n+1}-(n+1)z^{k+1}+(n+1)z^k\right|<|z|^{n+1}+(n+1)|z|^{k+1}+(n+1)|z|^k\\
&=r_0^{n+1}-(n+1)r_0^{k+1}+(n+1)r_0^k;
\end{align*}
a contradiction.
\end{proof}
	
The last lemma we need is basically Theorem 8 and Theorem 10(iii) from \cite{N1974}; the only difference is that in our lemma the unknown function acts on a~subinterval of the real line.
\begin{lemma}\label{lem73}
Let $\rho\in(0,\infty)$ and assume that a surjection $f\in C(I)$ satisfies \eqref{eqrho}.
\begin{enumerate}[\rm (i)]
\item If $\rho=1$, then $f(x)=x+c$, where $c$ is a constant.
\item If $\rho\in(0,1)\cup(1,\infty)$, then 
\begin{equation}\label{ab}
f(x)=\begin{cases}
\rho(x-a)+a&\hbox{for }x\leq a,\\
x&\hbox{for }x\in (a,b),\\
\rho(x-b)+b&\hbox{for }x\geq b,\\
\end{cases}
\end{equation}
where $a,b\in{\rm cl}I$ with $a\leq b$.
\end{enumerate}
\end{lemma}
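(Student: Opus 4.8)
The plan is to treat the three ranges of $\rho$ separately, reducing $\rho>1$ to $\rho\in(0,1)$ by passing to the dual equation and, throughout, reading off the structure of orbits from Theorem~\ref{thm31}. I first record facts valid for every $\rho>0$. As the constant coefficient $\rho$ of the characteristic polynomial $r^2-(1+\rho)r+\rho=(r-1)(r-\rho)$ is nonzero, $f$ is injective; being a continuous surjection, it is therefore a homeomorphism of $I$, hence strictly monotone. It is increasing: if $f$ were decreasing, then rewriting \eqref{eqrho} as $f^2=(1+\rho)f-\rho\,\mathrm{id}$ would give, for $x<y$, that $f^2(y)-f^2(x)=(1+\rho)(f(y)-f(x))-\rho(y-x)<0$ while $f^2$ is increasing. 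So each iterate $f^m$ ($m\in\Z$) is increasing, distinct full orbits never cross, and each forward orbit is monotone. Finally, iterating \eqref{eqrho} yields $f^{m+1}(x)-\rho f^m(x)=f(x)-\rho x$ for all $m\ge0$ and $x\in I$, so applying Theorem~\ref{thm31} to the orbit sequence $(f^m(x))_m$ gives $f^m(x)=A(x)+B(x)\rho^m$ for $m\in\Z$, with $A(x)=\frac{\rho x-f(x)}{\rho-1}$ and $B(x)=\frac{f(x)-x}{\rho-1}$ when $\rho\ne1$, and $f^m(x)=x+m\bigl(f(x)-x\bigr)$ when $\rho=1$.

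The case $\rho=1$ is then immediate: since $f^m$ is increasing, $f^m(x)-f^m(y)=(x-y)+m\bigl((f(x)-x)-(f(y)-y)\bigr)$ keeps a fixed sign as $m$ ranges over $\Z$, which is possible only if $f(x)-x=f(y)-y$; hence $f-\mathrm{id}$ is a constant $c$ and $f(x)=x+c$.

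For $\rho\in(0,1)$ one has $\rho^m\to0$, so $A(x)=\lim_{m\to\infty}f^m(x)$ exists, depends continuously on $x$, and is a fixed point of $f$; in particular $\mathrm{Fix}(f)$ is nonempty and closed. The crucial step is that $\mathrm{Fix}(f)$ is an interval: if some non-fixed $x_0$ lay between two fixed points, then the component $(c,d)$ of $I\setminus\mathrm{Fix}(f)$ containing $x_0$ would have $c,d\in\mathrm{Fix}(f)$, the monotone forward orbit from any $y\in(c,d)$ would be trapped in $(c,d)$, hence convergent to whichever endpoint it approaches, say $d$; then $A\equiv d$ on $(c,d)$, so $f(y)=\rho(y-d)+d$ there, and letting $y\to c^+$ gives $c=\rho(c-d)+d$, forcing $c=d$ because $\rho\ne1$ --- a contradiction. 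Write $a=\inf\mathrm{Fix}(f)$ and $b=\sup\mathrm{Fix}(f)$ (possibly infinite), so $f=\mathrm{id}$ on $(a,b)\cap I$. Running the same trapped-orbit computation on the outer components $(\inf I,a)$ and $(b,\sup I)$ --- where the direction of the orbit is forced because it must converge into $\mathrm{Fix}(f)$ --- yields $A\equiv a$ on the first and $A\equiv b$ on the second, whence $f(x)=\rho(x-a)+a$ for $x<a$ and $f(x)=\rho(x-b)+b$ for $x>b$; this is precisely \eqref{ab}.

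Finally, for $\rho>1$ I pass to the dual of \eqref{eqrho}, namely $\rho f^2-(1+\rho)f+\mathrm{id}=0$: the continuous surjection $f^{-1}$ of $I$ satisfies it, i.e. $f^{-1}$ satisfies \eqref{eqrho} with $\rho$ replaced by $1/\rho\in(0,1)$. By the previous case $f^{-1}$ has the form \eqref{ab} with ratio $1/\rho$ and some $a\le b$; each of its affine branches fixes the relevant endpoint and maps the corresponding half-line or interval bijectively onto itself, so inverting branch by branch shows that $f$ has the form \eqref{ab} with ratio $\rho$ and the same $a,b$. I expect the main obstacle to be the middle case $\rho\in(0,1)$: the rigidity argument excluding a non-fixed point between two fixed points, and the careful identification of $f$ on the two outer components, both of which genuinely use $\rho\ne1$ (surjectivity is used again only in the duality reduction). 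This amounts to Theorems~8 and~10(iii) of \cite{N1974}, the new feature being that $f$ may act on a proper subinterval.
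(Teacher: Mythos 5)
Your proof is correct and follows essentially the same route as the paper: derive the closed form $f^m(x)=A(x)+B(x)\rho^m$ (resp.\ $x+m(f(x)-x)$) for the orbit from the linear recurrence, exploit monotonicity of all iterates over $m\in\Z$ when $\rho=1$, pass to the limit $m\to\infty$ when $\rho\in(0,1)$, and handle $\rho>1$ by duality; in fact you supply more detail than the paper on why the fixed-point set is an interval and why \eqref{ab} follows. The only slip is the claim that $\mathrm{Fix}(f)$ is nonempty --- it can be empty (e.g.\ $f(x)=\rho x$ on $I=(0,\infty)$, where $a=b=0\in{\rm cl}I\setminus I$) --- but this does not damage the argument, since the limit $A(x)$ always lies in ${\rm cl}I$ and your component analysis, like the statement of \eqref{ab}, already accommodates $a,b\in{\rm cl}I$.
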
	
\begin{proof}
Since $f(x)=\frac{1}{1+\rho}\left(f^2(x)+\rho x\right)$, it follows that $f$ is strictly increasing.\\
(i) If $\rho=1$, then the monotonicity of $f$ and a simple induction applied to \eqref{eqrho} imply
$$
0<\frac{f^m(x)-f^m(y)}{x-y}=1+m\left(\frac{f(x)-f(y)}{x-y}-1\right)
$$
for all $m\in\mathbb Z$ and $x,y\in I$ with $x\neq y$. Therefore, passing with $m$ in turn to $\infty$ and $-\infty$ we conclude that $\frac{f(x)-f(y)}{x-y}= 1$ for all $x,y\in I$.\\
(ii) Assume that $\rho\in(0,1)$; the case when $\rho\in(1,\infty)$ can be proved similarly, or else can be deduced from the dual case.\\
Applying induction to \eqref{eqrho} with $\rho\neq 1$ we obtain
$$
f^m(x)=\frac{1}{\rho-1}(\rho x-f(x))+\frac{\rho^m}{\rho-1}(f(x)-x)
$$
for all $m\in\mathbb Z$ and $x\in I$. If $f(x)\neq x$ for some $x\in I$, then passing with $m$ to $\infty$ we get
$$
\lim_{m\to\infty}f^m(x)=\frac{1}{\rho-1}(\rho x-f(x)),
$$
which means that $f(x)=\rho(x-a)+a$ with some $a\in {\rm cl}I$. In consequence \eqref{ab} holds.
\end{proof}

Note that if $a=\inf I$ and $b=\sup I$, then \eqref{ab} represents the identity solution, while if $a=b$, then \eqref{ab} represents an affine solution.

\begin{theorem}\label{thm71}
Assume $0<k<n$. If $f\in C(I)$ is a surjection satisfying \eqref{e}, then:
\begin{enumerate}[\rm(i)]
\item $f(x)=x+c$, where $c$ is a~constant, in the case where $k$ is an odd number and $n=2k$;
\item 
\begin{equation}\label{picewise-linear}
f(x)=\begin{cases}
r_0(x-a)+a&\hbox{for }x\leq a,\\
x&\hbox{for }x\in (a,b),\\
r_0(x-b)+b&\hbox{for }x\geq b,\\
\end{cases}
\end{equation}
where $a,b\in{\rm cl}I$ with $a\leq b$ and $r_0$ stands for the different from $1$ positive root of equation \eqref{c}, in the case where $k$ is an odd number and $n$ is an even number with $n\neq 2k$;
\item 
$f(x)=r_0x+c$, where $r_0$ stands for the negative root of equation \eqref{c}, or $f$ is of form \eqref{picewise-linear}, where $a,b\in{\rm cl}I$ with $a\leq b$ and $r_0$ stands for the different from $1$ positive root of equation \eqref{c}, in the case where $k$ is an even number and $n$ is an odd number or both $k$ and $n$ are odd numbers.
\end{enumerate}
\end{theorem}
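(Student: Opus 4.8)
The plan is to follow the strategy of Theorems \ref{thm41} and \ref{thm52}: first cut \eqref{e} down to a polynomial-like equation whose characteristic polynomial retains only the real roots of \eqref{c}, and then apply Lemmas \ref{lem51} and \ref{lem73}. Since $f\in C(I)$ is a surjection it is a strictly monotone bijection, so for every $x_0\in I$ the two-sided orbit $(x_j)_{j\in\Z}$, $x_j=f^j(x_0)$, is well defined and obeys the linear recurrence whose characteristic equation is \eqref{c}. By Lemma \ref{lm2} the real roots of \eqref{c} are $r_1=1$, simple unless $n=2k$ (when it is double), together with the further simple roots $r_2$ (in cases (ii)--(iii)) and $r_3$ (in case (iii)); by Lemma \ref{lm1} every non-real root of \eqref{c} has modulus below $2n+1$, and by Lemma \ref{lm3} no non-real root is equimodular with a real one. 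These facts supply the hypotheses of the reduction machinery of \cite{DM2016} used in the proof of Theorem \ref{thm52}, so $f$ satisfies the polynomial-like equation whose characteristic polynomial is the product of $(r-r_i)$ over the real roots $r_i$ of \eqref{c}, with their multiplicities: namely $(r-1)^2$ in case (i), $(r-1)(r-r_2)$ in case (ii), and $(r-1)(r-r_2)(r-r_3)$ in case (iii).

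Cases (i) and (ii) are then immediate. In case (i) the reduced equation is $f^2(x)-2f(x)+x=0$, that is \eqref{eqrho} with $\rho=1$, and Lemma \ref{lem73}(i) gives $f(x)=x+c$. In case (ii) it is $f^2(x)-(1+r_2)f(x)+r_2x=0$, that is \eqref{eqrho} with $\rho=r_2$, where $r_2\in(0,1)\cup(1,\infty)$ by Lemma \ref{lm2}; Lemma \ref{lem73}(ii) then yields a function of the form \eqref{picewise-linear} with $r_0=r_2$, its degenerate instances $a=b$ and $a=\inf I$, $b=\sup I$ accounting for the affine and identity solutions.

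Case (iii) is the core of the argument: here $f$ solves a cubic polynomial-like equation with characteristic roots $1$, $r_2>0$ with $r_2\neq1$, and $r_3<0$, and the negative root must still be removed. I would split according to the monotonicity of $f$. If $f$ is increasing, the orbit $(x_j)_{j\in\Z}$ is monotone; writing $x_j=\alpha+\beta r_2^j+\gamma r_3^j$ via Theorem \ref{thm31} and noting that $r_3<0$ makes $\gamma r_3^j$ alternate in sign, a comparison of the growth of $r_2^j$ and $r_3^j$ as $j\to+\infty$ and as $j\to-\infty$ forces $\gamma=0$ for every $x_0$; hence $f$ satisfies \eqref{eqrho} with $\rho=r_2$, and Lemma \ref{lem73}(ii) gives \eqref{picewise-linear}. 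If $f$ is decreasing, the orbit is anti-monotone; the analogous comparison applied to $(-1)^j(x_{j+1}-x_j)$, in which the $r_2$-part oscillates while the $r_3$-part does not, forces $\beta=0$, so $f$ satisfies \eqref{eqrho} with $\rho=r_3<0$; Lemma \ref{lem51} (part (i) if $r_3\in(-1,0)$, part (ii) if $r_3<-1$, using surjectivity) then gives $f(x)=x$ or $f(x)=r_3x+c$, and since $f$ is decreasing only $f(x)=r_3x+c$ survives.

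The step I expect to be the main obstacle is the stripping of roots: on the one hand the precise invocation of the reduction results of \cite{DM2016}, i.e. checking in each parity configuration that Lemmas \ref{lm1} and \ref{lm3} really provide their hypotheses; on the other hand the passage in case (iii) from the cubic down to a quadratic of the form \eqref{eqrho}. In particular the growth comparison in case (iii) degenerates when $r_2$ and $r_3$ are equimodular, i.e. $r_3=-r_2$, so one must first rule this out; this should be extractable from the sign information on the auxiliary function $g$ recorded around Lemma \ref{lm2}, but it needs a short separate argument.
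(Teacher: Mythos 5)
Your proposal follows essentially the same route as the paper: the paper's own proof is only a short sketch that locates the real roots and separates them in modulus from the non-real ones via Lemmas \ref{lm1}--\ref{lm3}, defers the order reduction and the increasing/decreasing splitting to \cite[Theorem 3.6, Corollary 3.7 and Theorem 4.3]{DM2016}, and finishes with Lemmas \ref{lem51} and \ref{lem73}. Your explicit orbit-growth argument in case (iii) is a self-contained substitute for the cited Theorem 4.3, and your applications of Lemmas \ref{lem51} and \ref{lem73} (including the use of surjectivity where $r_3<-1$) match the intended endgame. The one step you leave open --- that $r_2$ and $r_3$ cannot be equimodular --- does need to be closed, but the fix is algebraic rather than coming from the sign analysis of $g$: if $r$ and $-r$ were both roots of \eqref{c}, then writing the two instances of $(n+1)(\pm r)^k=\sum_{i=0}^n(\pm r)^i$ and adding them (for odd $k$) or subtracting them (for even $k$) gives $2\sum_{j}r^{2j}=0$ or $2r\sum_{j}r^{2j}=0$ respectively, both impossible for real $r\neq 0$; this is the same style of estimate as in Lemma \ref{lm3}. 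With that inserted (and noting that $r_3\neq-1$ and $r_2\neq 1$ because the intervals in Lemma \ref{lm2} are open, so the alternating dominant term in your difference sequence is genuinely nonzero), your case-(iii) argument is sound.
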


\begin{proof} 
The idea of the proof in each of the considered cases is the same and run in the following way. First we determine all real roots of equation \eqref{c} with their multiplicities and localize all its non-real roots. This step is done in Lemmas \ref{lm2} and \ref{lm3}. Next, according to \cite[Theorem 3.6, Corollary 3.7 and Theorem 4.3]{DM2016}, we reduce equation \ref{e} to a simpler one, i.e. to an equation which have the same surjective and continuous solutions. Finally, we solve the simpler equation applying Lemmas \ref{lem51} and \ref{lem73}.
\end{proof}
	
\begin{remark}
In the case where $I=\R$  the surjectivity assumption in Theorem \ref{thm71} is satisfied automatically. However, this assumption is not essential when considering the dual equation is not necessary.
\end{remark}


\section{Examples, remarks and problems}\label{example}
	
In this section we give some examples showing how our main results works.
	
\begin{corollary}\label{cor81}
Assume that $p\neq0$ and $I\subset(0,\infty)$. Then $f\in C(I)$ satisfies
$$
f^n(x)=\left(\frac{x^p+[f(x)]^p+\ldots+[f^{n-1}(x)]^p}{n}\right)^{1/p}
$$
if and only if
\begin{enumerate}[\rm(i)]
\item $f(x)=x$ in the case where $n$ is an odd number;
\item $f(x)=x$ or $f(x)=(r_0x^p+c)^{1/p}$, where $c$ is a constant such that $f(I)\subset I$ and $r_0$ stands for the negative root of equation $(\ref{chen})$, in the case where $n$ is an even number.
\end{enumerate}
\end{corollary}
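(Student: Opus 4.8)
The plan is to recognise the displayed equation as the instance of \eqref{E} with $k=n$ in which $M$ is the power mean of exponent $p$, and then to reduce everything to Theorem~\ref{thm52} via Remark~\ref{rem11}. Set $\varphi\colon I\to\R$, $\varphi(x)=x^p$. Since $p\neq 0$ and $I$ is a non-trivial subinterval of $(0,\infty)$, $\varphi$ is a continuous bijection of $I$ onto the non-trivial interval $J:=\varphi(I)\subset(0,\infty)$; the direction of monotonicity of $\varphi$ (according to the sign of $p$) is immaterial here. With this $\varphi$, the right-hand side of the displayed equation is precisely the quasi-arithmetic mean \eqref{M} evaluated at $x,f(x),\dots,f^{n-1}(x)$, so the displayed equation is exactly \eqref{E} with $k=n$ and $M$ given by \eqref{M}.

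First I would apply Remark~\ref{rem11} with $k=n$: a continuous self-map $f$ of $I$ satisfies the displayed equation on $I$ if and only if $g:=\varphi\circ f\circ\varphi^{-1}$, which is then a continuous self-map of $J$, satisfies \eqref{en} on $J$. Theorem~\ref{thm52}, applied on the interval $J$, now lists all such $g$: if $n$ is odd, then $g(x)=x$; if $n$ is even, then $g(x)=x$ or $g(x)=r_0x+c$ with $c$ a constant and $r_0$ the negative root of \eqref{chen}. (The characteristic equation \eqref{chen} does not involve the generator $\varphi$, so it is literally the equation named in Theorem~\ref{thm52}.)

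It then remains to transport these possibilities back through $\varphi$. From $g=\mathrm{id}_J$ one gets $f=\varphi^{-1}\circ\mathrm{id}_J\circ\varphi=\mathrm{id}_I$, i.e.\ $f(x)=x$; from $g(x)=r_0x+c$ one gets $f(x)=\varphi^{-1}\bigl(r_0\varphi(x)+c\bigr)=(r_0x^p+c)^{1/p}$, an expression which is well defined and yields a self-map of $I$ precisely when $r_0x^p+c\in J$ for all $x\in I$, that is, precisely when $f(I)\subset I$; this explains why the affine branch must carry the proviso $f(I)\subset I$ rather than an unrestricted constant. For the converse direction one checks that $g(x)=x$ and $g(x)=r_0x+c$ do satisfy \eqref{en} by a direct computation using that $r_0$ is a root of \eqref{chen}, so that the corresponding $f$ do satisfy the displayed equation.

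There is essentially no real obstacle: all of the arithmetic content is carried by Theorem~\ref{thm52}, and the step from the arithmetic mean to the power mean is the routine conjugation of Remark~\ref{rem11}. The only point requiring some care is the bookkeeping of admissible constants: since $r_0\in(-1,0)$, the map $x\mapsto r_0x^p+c$ fails to send $I$ into itself for most choices of $c$ (and, when $I$ is unbounded, for every choice), so the affine branch has to be phrased with the side condition $f(I)\subset I$, exactly as in the statement.
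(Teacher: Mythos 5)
Your proof is correct and takes essentially the same route as the paper, whose entire argument is the one-line application of Theorem~\ref{thm52} and Remark~\ref{rem11} to the power-mean generator. (The paper writes ``$\varphi(x)=x^{1/p}$'' where, in the notation of Remark~\ref{rem11}, the generator should be $\varphi(x)=x^p$ so that $f=\varphi^{-1}\circ g\circ\varphi$ gives $(r_0x^p+c)^{1/p}$; your bookkeeping is the consistent one.)
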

	
\begin{proof}
It is enough to use Theorem \ref{thm52} and Remark \ref{rem11} with $\varphi$ of the form $\varphi(x)=x^{1/p}$. 
\end{proof}
	
\begin{corollary}\label{cor82}
Assume that $I\subset(0,\infty)$. Then $f\in C(I)$ satisfies
$$
f^n(x)=\sqrt[n]{xf(x)\ldots f^{n-1}(x)}
$$
if and only if
\begin{enumerate}[\rm(i)]
\item $f(x)=x$ in the case where $n$ is an odd number;
\item $f(x)=x$ or $f(x)=cx^{r_0}$, where $c$ is a constant such that $f(I)\subset I$ and $r_0$ stands for the negative root of equation \eqref{chen}, in the case where $n$ is an even number.
\end{enumerate}
\end{corollary}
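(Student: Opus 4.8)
The plan is to reduce Corollary~\ref{cor82} to Theorem~\ref{thm52} via the same change-of-variables trick used for Corollary~\ref{cor81}, but now with the logarithmic conjugacy. Since $I\subset(0,\infty)$, the map $\varphi(x)=\log x$ is a continuous bijection from $I$ onto the interval $J=\log I\subset\R$. The geometric mean $M(x_0,\ldots,x_{n-1})=\sqrt[n]{x_0\cdots x_{n-1}}$ is precisely the quasi-arithmetic mean generated by $\varphi$, i.e. $M(x_0,\ldots,x_{n-1})=\varphi^{-1}\bigl(\frac1n\sum_{i=0}^{n-1}\varphi(x_i)\bigr)$, because $\log\sqrt[n]{x_0\cdots x_{n-1}}=\frac1n\sum\log x_i$.

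First I would invoke Remark~\ref{rem11} (in the form appropriate to $k=n$, i.e.\ with the mean taken over $f^0,\ldots,f^{n-1}$) to conclude that $F\in C(I)$ satisfies the displayed geometric-mean equation on $I$ if and only if $f=\varphi\circ F\circ\varphi^{-1}=\log\circ\, F\circ\exp$ satisfies \eqref{en} on $J$, and that $F$ is continuous iff $f$ is. Next I would apply Theorem~\ref{thm52} to $f\in C(J)$: in the odd-$n$ case it forces $f(y)=y$ for all $y\in J$, and in the even-$n$ case it gives either $f(y)=y$ or $f(y)=r_0y+c$ with $c$ constant and $r_0$ the negative root of \eqref{chen}. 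Finally I would transport these back through $F=\varphi^{-1}\circ f\circ\varphi=\exp\circ\, f\circ\log$: the fixed-point solution $f(y)=y$ yields $F(x)=x$, while $f(y)=r_0 y+c$ yields $F(x)=\exp(r_0\log x+c)=e^{c}x^{r_0}$, i.e.\ $F(x)=\tilde c\,x^{r_0}$ with $\tilde c=e^{c}>0$ an arbitrary positive constant; the requirement $F(I)\subset I$ is exactly the constraint recorded in the statement. The converse direction is immediate since each listed $F$ conjugates back to an explicit solution of \eqref{en}, which Theorem~\ref{thm52} confirms solves that equation.

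There is essentially no obstacle here: the only point requiring a line of care is checking that the constant $c$ in $f(y)=r_0y+c$ can be any real number (so that $\tilde c=e^c$ ranges over all of $(0,\infty)$), subject only to the range condition, and that the parameter in the statement is written multiplicatively as $cx^{r_0}$ rather than additively. One should also note that Theorem~\ref{thm52} as stated applies to $f\in C(J)$ for an arbitrary interval $J$, so no issue arises from $J=\log I$ possibly being a proper subinterval of $\R$ rather than all of $\R$. Hence the entire proof is the single sentence: apply Theorem~\ref{thm52} and Remark~\ref{rem11} with $\varphi(x)=\log x$, and rewrite the affine solution in multiplicative form.

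\begin{proof}
It is enough to use Theorem \ref{thm52} and Remark \ref{rem11} with $\varphi$ of the form $\varphi(x)=\log x$; note that an affine solution $f(y)=r_0y+c$ of \eqref{en} corresponds to $F(x)=e^{c}x^{r_0}$.
\end{proof}
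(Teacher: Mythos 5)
Your proposal is correct and is essentially the paper's own proof: the paper likewise disposes of the corollary in one line by combining Theorem~\ref{thm52} with Remark~\ref{rem11} via the logarithmic/exponential conjugacy (the paper writes ``$\varphi=\exp$'' where, in the notation of Remark~\ref{rem11} and of \eqref{M}, the generator of the geometric mean is really $\varphi=\log$ as you have it, but this is the same conjugation). Your explicit transport of $f(y)=r_0y+c$ to $F(x)=\e^{c}x^{r_0}$ is exactly the intended computation.
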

	
\begin{proof}
It is enough to use Theorem \ref{thm52} and Remark \ref{rem11} with $\varphi=\exp$.
\end{proof}
	
\begin{remark}\label{rem83}
In the case where $I=\R$ equation \eqref{en} is the dual equation to \eqref{e0}. However, as we have shown in Corollary \ref{cor81}, there exist non-surjective solutions to \eqref{en}.
\end{remark}
	
\begin{corollary}\label{cor84}
Assume that $p\neq0$ and $I\subset(0,\infty)$. Then $f\in C(I)$ satisfies
\begin{equation}\label{eqpowboros}
\frac{[f(x)]^p+[f^2(x)]^p+\ldots+[f^n(x)]^p}{n}=x^p
\end{equation}
if and only if $f(x)=x$. 
\end{corollary}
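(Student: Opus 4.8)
The plan is to recognise \eqref{eqpowboros} as a particular instance of \eqref{E}, to reduce it to \eqref{e0} by means of Remark \ref{rem11}, and then to conclude with Theorem \ref{thm41}.

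First I would rewrite \eqref{eqpowboros}. Since $f(I)\subset I\subset(0,\infty)$, each term $[f^i(x)]^p$ is well defined and positive; multiplying \eqref{eqpowboros} by $n$, adding $x^p=[f^0(x)]^p$ to both sides, dividing by $n+1$ and extracting the $p$-th root transforms \eqref{eqpowboros} into the equivalent equation
$$
x=\left(\frac{1}{n+1}\sum_{i=0}^n[f^i(x)]^p\right)^{1/p}.
$$
This is precisely \eqref{E} with $k=0$ and $M$ the quasi-arithmetic mean \eqref{M} generated by $\varphi(x)=x^p$. The map $\varphi\colon I\to J:=\varphi(I)$ is a continuous bijection onto the interval $J$, and $J\subset(0,\infty)$; in particular $J\ne\R$. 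Hence, by Remark \ref{rem11}, $f$ satisfies \eqref{eqpowboros} if and only if $g:=\varphi\circ f\circ\varphi^{-1}\in C(J)$ satisfies \eqref{e0} on $J$.

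Because $J\ne\R$, Theorem \ref{thm41}(i) applies irrespective of the parity of $n$ and yields $g(x)=x$ for every $x\in J$; consequently $f=\varphi^{-1}\circ g\circ\varphi$ is the identity on $I$, that is, $f(x)=x$. The reverse implication is immediate. I do not expect any genuine difficulty in this argument; the single point that needs attention is the observation that the transformed domain $J$ is contained in $(0,\infty)$ and therefore can never equal the whole real line, which is exactly what excludes the affine branch of Theorem \ref{thm41}(ii). This is in contrast with Corollaries \ref{cor81} and \ref{cor82}, whose reduction leads to the case $k=n$ and Theorem \ref{thm52}, which does admit an affine solution on an arbitrary interval when $n$ is even.
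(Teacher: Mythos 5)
Your argument is correct and follows the paper's own route: conjugate by the power function to reduce \eqref{eqpowboros} to \eqref{e0}, apply Theorem \ref{thm41}, and observe that the transformed interval $J$ lies in $(0,\infty)$ and hence cannot be all of $\R$, which rules out the affine branch. The only (immaterial) discrepancy is that the paper writes the generator as $\varphi(x)=x^{1/p}$ where you use $\varphi(x)=x^p$; either way $J\neq\R$ and the conclusion $f(x)=x$ follows.
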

	
\begin{proof}
It is enough to use Theorem \ref{thm41} and Remark \ref{rem11} with $\varphi$ of the form $\varphi(x)=x^{1/p}$. The root $r_0$ has no influence for solutions to \eqref{eqpowboros}, because the interval $\{x^{1/p}\colon x\in I\}$ cannot be equal to the whole real line.
\end{proof}
	
\begin{corollary}\label{cor85}
Assume that $I\subset(0,\infty)$. Then $f\in C(I)$ satisfies	
$$
f(x)f^2(x)\ldots f^n(x)=x^n
$$
if and only if
\begin{enumerate}[\rm(i)]
\item $f(x)=x$ in the case where $n$ is an odd number or $I\neq(0,\infty)$;
\item $f(x)=x$ or $f(x)=cx^{r_0}$, where $c$ is a constant such that $f(I)\subset I$ and $r_0$ stands for the negative root of equation \eqref{che0}, in the case where $n$ is an even number and $I=(0,\infty)$.
\end{enumerate}
\end{corollary}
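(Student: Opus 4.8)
The plan is to recognize the equation as a special case of \eqref{E} and then quote Theorem \ref{thm41}. Since $I\subset(0,\infty)$ we may pass to logarithms: rewriting $f(x)f^2(x)\cdots f^n(x)=x^n$ as $x=\bigl(x\cdot f(x)\cdots f^n(x)\bigr)^{1/(n+1)}$ exhibits it as \eqref{E} with $k=0$ and $M$ the geometric mean, which is precisely the quasi-arithmetic mean \eqref{M} generated by the logarithm. Hence, by Remark \ref{rem11}, $f\in C(I)$ solves our equation if and only if $g:=\log\circ f\circ\exp$ is a continuous self-map of the interval $J:=\{\log x:x\in I\}$ satisfying \eqref{e} with $k=0$, that is \eqref{e0}, on $J$. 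The only bookkeeping point here is that $J=\R$ exactly when $I=(0,\infty)$, while $J$ is a proper subinterval of $\R$ otherwise.

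Next I would apply Theorem \ref{thm41} to $g$. If $n$ is odd, or $n$ is even and $I\neq(0,\infty)$ (so $J\neq\R$), part~(i) gives $g(y)=y$, hence $f(x)=x$. If $n$ is even and $I=(0,\infty)$ (so $J=\R$), part~(ii) gives $g(y)=y$ or $g(y)=r_0y+c_0$ with $c_0\in\R$ and $r_0$ the negative root of \eqref{che0}; transporting back through $\exp$ yields $f(x)=x$ or $f(x)=cx^{r_0}$ with $c=\mathrm e^{c_0}>0$. Since $r_0<0$ and $c>0$, such an $f$ already maps $(0,\infty)$ onto $(0,\infty)$, so the side condition $f(I)\subset I$ is automatic and is retained only for uniformity with the statement.

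It remains to check the converse, i.e.\ that the listed functions really are solutions. The case $f(x)=x$ is immediate. For $f(x)=cx^{r_0}$ it suffices to verify that $g(y)=r_0y+\log c$ satisfies \eqref{e0}: from $g^i(y)=r_0^iy+\log c\cdot\tfrac{r_0^i-1}{r_0-1}$ one gets $\sum_{i=1}^n g^i(y)=y\sum_{i=1}^n r_0^i+\tfrac{\log c}{r_0-1}\bigl(\sum_{i=1}^n r_0^i-n\bigr)$, which equals $ny$ exactly because $r_0$ is a root of \eqref{che0}; applying $\exp$ returns $f(x)f^2(x)\cdots f^n(x)=x^n$.

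I do not expect a genuine obstacle: the whole content sits in Theorem \ref{thm41} and Remark \ref{rem11}, and the only care needed is tracking when $J=\R$ together with the short converse computation. The point worth flagging is that, unlike in Corollary \ref{cor84}, where the generator $x\mapsto x^{1/p}$ never has range $\R$ and the negative root therefore plays no role, here $\exp$ maps $\R$ onto all of $(0,\infty)$, which is exactly why the negative root $r_0$ contributes a genuine one-parameter family of solutions when $I=(0,\infty)$.
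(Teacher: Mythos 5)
Your proposal is correct and follows exactly the paper's route: the paper's proof is the one-line observation that the statement follows from Theorem \ref{thm41} and Remark \ref{rem11} with the exponential/logarithmic conjugation, which is precisely what you carry out (with the additional, harmless, explicit verification of the converse and of the bookkeeping that $J=\R$ iff $I=(0,\infty)$).
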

	
\begin{proof}
It is enough to use Theorem \ref{thm41} and Remark \ref{rem11} with $\varphi=\exp$.
\end{proof}

\begin{corollary}\label{cor86}
Assume that $p\neq0$, $m\in\N$ and $I\subset(0,\infty)$. Then $f\in C(I)$ satisfies
\begin{equation}\label{eqpowboros1}
\frac{[f^m(x)]^{p}+[f^{2m}(x)]^{p}+\ldots+[f^{nm}(x)]^p}{n}=x^p
\end{equation}
if and only if 
\begin{enumerate}[\rm(i)]
\item $f(x)=x$ in the case where $m$ is an odd number or $I$ is neither open nor closed;
\item $f(x)=x$ or 
\begin{equation}\label{f0}
f(x)=\begin{cases}
f_0(x)&\hbox{ for every }x\le a,\\
f_0^{-1}(x)&\hbox{ for every }x<a,\\
\end{cases}
\end{equation}
where $a$ is an arbitrary interior point of $I$ and $f_0$ is an arbitrary continuous and strictly decreasing function defined on $I\cap(0,a]$ such that  $\lim_{x\to\inf I}f_0(x)=\sup I$ and $f_0(a)=a$, in the case where $m$ is an even number and $I$ is open or closed. 
\end{enumerate}
\end{corollary}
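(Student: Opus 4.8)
The plan is to show that \eqref{eqpowboros1} is equivalent to the condition $f^m=\mathrm{id}_I$ and then to describe all $f\in C(I)$ with $f^m=\mathrm{id}_I$. For the equivalence, observe that $f^{jm}=(f^m)^j$ for every $j$, so putting $g=f^m\in C(I)$ we see that $f$ satisfies \eqref{eqpowboros1} exactly when $g$ satisfies \eqref{eqpowboros}. Since $I\subset(0,\infty)$, Corollary~\ref{cor84} applied to $g$ forces $g(x)=x$, that is $f^m=\mathrm{id}_I$. Conversely, if $f^m=\mathrm{id}_I$ then $f^{jm}(x)=x$ for $j=1,\dots,n$, so the left-hand side of \eqref{eqpowboros1} equals $\tfrac1n\cdot n\,x^p=x^p$. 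Hence it remains only to classify the continuous self-maps of $I$ whose $m$-th iterate is the identity.

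Here the standard monotone-iteration argument does most of the work. Being a left inverse of $f^{m-1}$, the map $f$ is injective, hence strictly monotone. If $f$ is increasing and $f\neq\mathrm{id}_I$, choosing $x_0$ with $f(x_0)>x_0$ and iterating gives $f^m(x_0)>x_0$ (and symmetrically if $f(x_0)<x_0$), a contradiction; so an increasing solution is the identity. If $f$ is decreasing, then $f^m$ is decreasing for odd $m$ and so cannot be the identity; hence $m$ is even, $f^2$ is increasing, and $(f^2)^{m/2}=\mathrm{id}_I$ together with the increasing case yields $f^2=\mathrm{id}_I$. Thus the non-identity solutions are precisely the continuous decreasing involutions of $I$.

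Finally, a continuous decreasing $f$ with $f\circ f=\mathrm{id}_I$ is a bijection of $I$ onto itself, and, being order-reversing, it interchanges the two endpoints of $I$; this forces $\inf I\in I$ if and only if $\sup I\in I$, i.e.\ $I$ is open or closed (with $+\infty$ counted as an excluded endpoint), while a half-open $I$ admits no such map. This, together with the odd-$m$ case, accounts for part (i). When $I$ is open or closed, the continuous strictly decreasing function $x\mapsto f(x)-x$ changes sign on $I$, so $f$ has a unique, necessarily interior, fixed point $a$; putting $f_0=f|_{I\cap(0,a]}$ we obtain $\lim_{x\to\inf I}f_0(x)=\sup I$ from monotonicity and $f(a)=a$, and the identity $f\circ f=\mathrm{id}_I$ forces $f|_{I\cap[a,\sup I)}=f_0^{-1}$, which is exactly the form \eqref{f0}. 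Conversely, any $f$ assembled from such an $f_0$ as in \eqref{f0} is continuous (the two pieces agree at $a$), strictly decreasing and involutive, hence satisfies \eqref{eqpowboros1}. Collecting the cases gives (i) and (ii).

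The step I expect to be most delicate is the bookkeeping around the type of the interval $I$: one must verify that no continuous order-reversing bijection of $I$ onto itself exists when $I$ is half-open (including the half-lines inside $(0,\infty)$), and that in the open and closed cases the fixed point is interior and the restriction $f_0$ to its left already determines $f$ on its right. Everything else reduces to routine monotone-iteration estimates.
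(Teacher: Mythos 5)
Your proof is correct and follows essentially the same route as the paper: setting $g=f^m$ reduces \eqref{eqpowboros1} to \eqref{eqpowboros}, so Corollary \ref{cor84} gives $f^m(x)=x$, after which one classifies the continuous solutions of this Babbage-type equation. The only difference is that you prove that classification yourself (increasing solutions are the identity, decreasing ones force $m$ even and are involutions of the form \eqref{f0}, and half-open intervals admit no decreasing involution), whereas the paper outsources exactly these facts to \cite[Theorems 15.2--15.3 and Lemma 15.2]{K1968}; your self-contained argument, including the careful treatment of the interval endpoints, is sound.
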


\begin{proof}
From Corollary \ref{cor84} we conclude that 
\begin{equation}\label{fm}
f^m(x)=x.
\end{equation}
Now it is enough to apply \cite[Theorem 15.3, Theorem 15.2 and Lemma 15.2]{K1968} jointly with the fact that there is no continuous strictly decreasing function satisfying \eqref{fm} defined on an interval which is neither open nor closed.
\end{proof}

\begin{corollary}\label{cor87}
Assume that $m\in\N$ and $I$ is bounded. Then a bijection $f\in C(I)$ satisfies	
\begin{multline*}
\frac{\exp\left(f^m(x)\right) +\exp\left(f^{m+1}(x)\right)+\ldots+\exp\left(f^{m+4n-2}(x)\right)}{4n-1} \\ =\exp\left(f^{m+2n-1}(x)\right)
\end{multline*}
if and only if
\begin{enumerate}[\rm(i)]
\item $f(x)=x$ in the case where $m$ is an odd number or $I$ is neither open nor closed;
\item $f(x)=x$ or $f$ is of the form \eqref{f0}, where $a$ is an arbitrary interior point of $I$ and $f_0$ is an arbitrary continuous and strictly decreasing function defined on $I\cap(-\infty,a]$ such that  $\lim_{x\to\inf I}f_0(x)=\sup I$ and $f_0(a)=a$, in the case where $m$ is an even number and $I$ is open or closed.
\end{enumerate}
\end{corollary}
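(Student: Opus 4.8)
The plan is to imitate the proof of Corollary~\ref{cor86}: first extract from the equation the relation $f^m(x)=x$, and then list all continuous self-bijections of $I$ that are iterative roots of the identity. For the first step, I would read the equation as an instance of \eqref{e} with $f^m$ playing the role of $f$. Composing with $\exp$, i.e.\ applying Remark~\ref{rem11} with $\varphi=\exp$, and putting $g=\exp\circ f^m\circ\exp^{-1}$, one obtains a continuous self-bijection $g$ of the bounded interval $J=\exp(I)\subset(0,\infty)$ which satisfies \eqref{e} with the arithmetic mean, with the exponent $k$ equal to $2n-1$ and with $n$ replaced by $4n-2$; note that $2n-1$ is odd and $4n-2=2(2n-1)$, so the distinguished iterate is the central one among the $4n-1$ iterates being averaged. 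Hence we are in case~(i) of Theorem~\ref{thm71}, and since $g$ is a surjection it follows that $g(x)=x+c$ for some constant $c$; as $g$ is a self-bijection of the \emph{bounded} interval $J$, necessarily $c=0$, whence $g=\mathrm{id}$ on $J$ and therefore $f^m(x)=x$ for every $x\in I$.

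For the second step I would invoke the same results of Kuczma \cite{K1968} as in the proof of Corollary~\ref{cor86}. A continuous increasing self-bijection of $I$ with $f^m=\mathrm{id}$ must be the identity. A continuous strictly decreasing one satisfies $f^2=\mathrm{id}$, which is compatible with $f^m=\mathrm{id}$ only when $m$ is even; moreover such an $f$ is a continuous strictly decreasing involution, it has exactly one fixed point $a$, this $a$ must be an interior point of $I$, and consequently $f$ has the form \eqref{f0}, while the mere existence of $a$ forces $I$ to be open or closed. This gives assertion~(i) — when $m$ is odd, or when $I$ is neither open nor closed, no strictly decreasing solution exists, so only $f(x)=x$ remains — and assertion~(ii).

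The step that needs genuine care is the reduction in the first paragraph: one has to verify that, after conjugation by $\exp$ and after treating $f^m$ as the unknown function, the equation is literally \eqref{e} in the configuration ``$k$ odd and $n=2k$'', so that Theorem~\ref{thm71}(i) — and not the piecewise-affine alternatives of Theorem~\ref{thm71} — applies, and that the boundedness of $\exp(I)$ is precisely what eliminates the additive constant and upgrades $g(x)=x+c$ to $f^m(x)=x$. Once this is secured, the rest is a verbatim repetition of the argument in Corollary~\ref{cor86}.
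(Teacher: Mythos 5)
Your overall plan --- first reduce the equation to $f^m(x)=x$ via Theorem~\ref{thm71}(i) and Remark~\ref{rem11}, then classify the continuous iterative roots of the identity as in Corollary~\ref{cor86} --- is exactly the plan of the paper's (two-line) proof, and your second step is carried out correctly. The problem is in the reduction itself. The iterates occurring in the equation are $f^{m},f^{m+1},\ldots,f^{m+4n-2}$, i.e.\ \emph{consecutive} iterates of $f$ shifted by $m$; they are not iterates of $f^{m}$, since $(f^{m})^{i}=f^{mi}\neq f^{m+i}$ unless $m=1$. Consequently the function $g=\exp\circ f^{m}\circ\exp^{-1}$ does \emph{not} satisfy \eqref{e} with $k=2n-1$ and $n$ replaced by $4n-2$, and the step ``$g(x)=x+c$, hence $f^{m}=\mathrm{id}$'' is unsupported as written.

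What the printed equation actually yields is stronger. Put $h=\exp\circ f\circ\exp^{-1}$ on $J=\exp(I)$, so that $h^{j}(\e^{x})=\exp(f^{j}(x))$, and replace $x$ by $f^{-m}(x)$ (legitimate, since $f$ is a bijection of $I$); the equation becomes $h^{2n-1}(u)=\frac{1}{4n-1}\sum_{i=0}^{4n-2}h^{i}(u)$ on $J$, i.e.\ \eqref{e} for $h$ itself with $k=2n-1$ odd and $4n-2=2k$. Theorem~\ref{thm71}(i) and the boundedness of $J$ then give $h=\mathrm{id}$, hence $f=\mathrm{id}$ outright --- which clashes with assertion~(ii): for even $m$ and a decreasing involution $f$ the printed equation collapses to $2n\e^{x}+(2n-1)\e^{f(x)}=(4n-1)\e^{f(x)}$, which forces $f(x)=x$, so those functions are not solutions of the equation as displayed. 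Your reduction (and the statement's list of solutions) only becomes correct if the exponents are read as $m,2m,\ldots,(4n-1)m$ with distinguished exponent $2nm$ --- the exact analogue of Corollary~\ref{cor86}, and presumably what the authors intended --- but that is a silent rewriting of the equation, and the step on which your whole first paragraph rests needs to be either justified for the printed exponents (it cannot be) or flagged as a correction to the statement.
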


\begin{proof}
Applying assertion (i) of Theorem \ref{thm71} and Remark \ref{rem11} with $\varphi=\log$ we conclude that \eqref{fm} holds.
Now we argue as in Corollary \ref{cor86}.
\end{proof}

We finish this paper with two problems motivated by the main results.
\begin{problem}
Can we omit the sujectivity assumption on $f$ in Theorem \ref{thm71}?
\end{problem}
\begin{problem}
Determine all (bijections) $f\in C(I)$ satisfying \eqref{e} in the case where both the numbers $k$ and $n$ are even.
\end{problem}
	
\subsection*{Acknowledgement}
Research of the second-named author was supported by the University of Silesia Mathematics Department (Iterative Functional Equations and Real Analysis program).

\end{document}